\documentclass[a4paper,11pt]{article} 
\usepackage{amsmath,amssymb,enumerate,color}
\usepackage{amsthm,color}
\usepackage[colorlinks=true,linkcolor=red,citecolor=blue]{hyperref}
\usepackage{txfonts}
\usepackage{bigints}
\usepackage{comment}
\usepackage{graphicx}
\usepackage{enumitem}

\usepackage{bm}

\newcommand{\N}{\mathbb{N}}

\newtheorem{theorem}{Theorem}[section]
\newtheorem{lemma}[theorem]{Lemma}
\newtheorem{proposition}[theorem]{Proposition}

\theoremstyle{remark}
\newtheorem{remark}{Remark}[section]
\theoremstyle{definition}

\newtheorem{definition}{Definition}[section]

\numberwithin{equation}{section}

\makeatletter
\def\@cite#1#2{[{{\bfseries #1}\if@tempswa , #2\fi}]}
\makeatother                  
\begin{document}
\begin{center}
\Large{{\bf
Global existence of solutions for nonlinear damped wave equations in an exterior domain with nonlinearities of derivative type}}
\end{center}

\vspace{5pt}

\begin{center}
Tuan Anh Dao
\footnote{
Faculty of Mathematics and Informatics, Hanoi University of Science and Technology, No.1 Dai Co Viet road, Hanoi, Vietnam.
E-mail:\ {\tt anh.daotuan@hust.edu.vn} (\textit{Corresponding author})},
Dinh Van Duong
\footnote{
Faculty of Mathematics and Informatics, Hanoi University of Science and Technology, No.1 Dai Co Viet road, Hanoi, Vietnam.
E-mail:\ {\tt van.duongdinh@hust.edu.vn}},
Masahiro Ikeda
\footnote{
Center for Advanced Intelligence Project, RIKEN 1-4-1, Nihonbashi, Chuo-ku, Tokyo 103-0027, Japan/Graduate School of Information Science and Technology, The University of Osaka, 1-5 Yamadaoka, 565-0871, Suita, Osaka, Japan.
E-mail:\ {\tt masahiro.ikeda@riken.jp}}\ 

\end{center}

\newenvironment{summary}{\vspace{.5\baselineskip}\begin{list}{}{%
     \setlength{\baselineskip}{0.85\baselineskip}
     \setlength{\topsep}{0pt}
     \setlength{\leftmargin}{12mm}
     \setlength{\rightmargin}{12mm}
     \setlength{\listparindent}{0mm}
     \setlength{\itemindent}{\listparindent}
     \setlength{\parsep}{0pt}
     \item\relax}}{\end{list}\vspace{.5\baselineskip}}

\begin{summary}
{\footnotesize {\bf Abstract.}
In this paper, we are interested in considering the semi-linear damped wave equation with the power nonlinearity of derivative type in $2$D exterior domains to indicate the global (in time) existence of small data solutions, which has never appeared in previous studies. Our proof is based on constructing a judicious time-weighted function space and demonstrating nonlinear estimates compatible with fractional powers of the Dirichlet Laplacian linked to the semigroup decay structure. Moreover, the large time behavior of the time derivative of the obtained global solutions and its sharp estimate are also discussed in this paper.}
\end{summary}

\noindent{\footnotesize{\it Mathematics Subject Classification}\/ (2020): 35A01, 35G31, 35B40%
}\\
{\footnotesize{\it Key words and phrases}\/: %
Damped wave equations, Power nonlinearity of derivative type, Exterior domain, Global existence, Asymptotic profile
}
\tableofcontents

\section{Introduction}

Let $\Omega\subset\mathbb{R}^2$ be a $\mathcal{C}^{1,1}$ exterior domain, namely, $\Omega=\mathbb{R}^2\setminus \overline{\mathcal{O}}$, where $\mathcal{O}\subset\mathbb{R}^2$ is a bounded open set and $\partial \mathcal{O}$ is of class $\mathcal{C}^{1,1}$. Let $\mathcal{A}:=-\Delta_{D,\Omega}$ denote the Dirichlet Laplacian on $\Omega$, whose domain is given by
\[
D(\mathcal{A})
:=
\{
u\in H^1_0(\Omega)
\;:\;
\Delta u\in L^2(\Omega)
\}.
\]
For $u\in D(\mathcal{A})$, it holds that $\mathcal{A}u=-\Delta u$. In this paper, we would like to study the global (in time) existence of solutions to the following initial-boundary value problem for the semi-linear damped wave equation with the power nonlinearity of derivative type in $2$D exterior domains:
\begin{equation}\label{eq:main}
\begin{cases}
u_{tt}+u_t+ \mathcal{A}u= \mathcal{N}[u_t], & (t,x)\in(0,\infty)\times\Omega,\\
u(t,x)=0, &(t,x)\in(0,\infty)\times \partial\Omega, \\
u(0,x)=u_0(x),\quad u_t(0,x)=u_1(x), & x\in \Omega.
\end{cases}
\end{equation}
Here for $p>1$, the nonlinear function $\mathcal{N}:\mathbb{C}\rightarrow\mathbb{C}$ denotes the $p$-th order power nonlinearity such as $\mathcal{N}[z]=\pm |z|^p$ or $\pm |z|^{p-1}z$ for typical examples. The functions
$u_0:\Omega\to\mathbb C$ and $u_1:\Omega\to\mathbb C$
represent the initial displacement and initial velocity, respectively. We are interested in the global existence of solutions generated by sufficiently small initial data in suitable energy-type spaces. To understand our motivation in this work, let us make a brief overview of previous studies involving damped wave equations as follows:

The semi-linear damped wave equation has been extensively studied as a model describing the interaction between dissipation phenomena and nonlinear propagation. One of the fundamental observations is that solutions asymptotically exhibit diffusion-like behavior, and therefore their global dynamics are closely related to those of semi-linear heat equations. For the Cauchy problem in the whole space, a lot of results for global/local (in time) existence, asymptotic behavior, blow-up, and even lifespan estimates are so far well-known. In particular, Matsumura in \cite{Matsumura1976} established decay estimates for the corresponding linear damped wave equation, which became one of fundamental tools in terms of the study of nonlinear problems. Later, both the global (in time) existence of solutions and the critical exponent phenomena were widely investigated by many authors (see, for examples, \cite{Nishihara2003,IkehataTanizawa2005,Todorova2001}). According to the works \cite{Iked-Ogaw-2016,Lai-Zhou-2019,Fuji-Iked-Waka-2019,Iked-Waka-2020}, the authors demonstrated the sharp lifespan estimates for all spatial dimensions. More recently, Ikeda–Taniguchi–Wakasugi in \cite{IkedaTaniguchiWakasugi2023} considered an abstract framework for semi-linear damped wave equations on measure spaces and derived linear decay estimates of Matsumura-type under general assumptions on the heat semigroup. As an application, they proved the global (in time) existence of small data solutions to semi-linear damped wave equations with the usual power nonlinearity of the form $|u|^p$, with $p>1$, including the case of exterior domains accompanied by the Dirichlet Laplacian. In comparison with Cauchy problems, we can say that exterior domain problems show additional difficulties caused by the lack of translation invariance and the interaction with the boundary conditions. On the other hand, Fourier analysis is no longer directly applicable so that one needs alternative approaches based on semigroup theory and spectral analysis. Speaking about such kind of results for semi-linear damped wave equations in exterior domains, we want to refer the readers to several papers, for instance, \cite{Ikehata2005,OgawaTakeda2009,Fino2017,IkedaSobajima2019-1,IkedaJleliSamet2020,DaoIkeda2023} and references therein. More precisely, under the Dirichlet boundary
condition Ikehata in \cite{Ikehata2005} derived a global existence result for compactly supported initial data having a small energy in $2$D, provided that the condition $p>2$ holds. Afterward, Ogawa-Takeda in \cite{OgawaTakeda2009} and Fino-Ibrahim-Wehbe in \cite{Fino2017} applied the Kaplan-Fujita method to indicate non-existence of non-negative global solutions for any dimension when $p\in (1,1+2/n)$ and $p=1+2/n$, respectively. Then, Ikeda-Sobajima in \cite{IkedaSobajima2019-1} have succeeded in catching sharp upper lifespan estimates for $2$D exterior problems and clarified the exceptional double-exponential behavior at the critical exponent $p=2$. This phenomenon reflects the recurrence property of Brownian motion in $2$D and differs significantly from higher-dimensional cases. Moreover, Ikeda-Jleli-Samet in \cite{IkedaJleliSamet2020} explored both existence and nonexistence phenomena for semi-linear exterior problems under several different boundary conditions and showed how the boundary geometry affects the critical behavior. Very recently, Ikeda–Sobajima–Taniguchi–Wakasugi in \cite{IkedaSobajimaTaniguchiWakasugi2024} not only have refined the lifespan analysis for semi-linear damped wave equations in $2$D exterior domains but also have obtained sharp lower estimates together with new weighted inequalities adapted to the logarithmic decay structure.

Despite these developments, the fact is that almost previous studies only focus on nonlinear terms depending on the solution itself, typically of the form $|u|^p$. In contrast, much less is known for derivative-type nonlinearities such as $|u_t|^p$ even some existing literature also take into consideration of Cauchy problems (see more \cite{Matsumura1976,DuongDao}). In particular, Matsumura in \cite{Matsumura1976} established the global (in time) existence of solutions for the entire range $p > 1$ ($n = 1$) and $p \ge 2$ ($n \ge 2$), which are improved in the larger range for all $p > 1$ ($n = 1,2$) and for $p > 3/2$ ($n = 3$) by Duong-Dao \cite{DuongDao}. This improvement is relied on choosing a suitable weighted solution space and effectively employ ingredients from Harmonic Analysis linked to the Banach fixed-point theorem. From these observations, we can say that derivative-type nonlinearity directly interacts with the dissipative structure, and furthermore, gives some difficulties by requiring higher regularity estimates together with nonlinear estimates for fractional powers of the Dirichlet Laplacian in the situation of exterior domains. Inspired strongly by \cite{DuongDao}, the purpose of the present paper is to demonstrate a result for the global (in time) existence of small data solutions to \eqref{eq:main}. Our approach is based on a combination between decay estimates for the damped linear semigroup and fractional nonlinear estimates in Sobolev spaces associated with the Dirichlet Laplacian. More precisely, we want to develop the fractional chain rule on the whole space to its new version in the exterior domain setting (see later, Proposition \ref{prop:chain}), which plays a key point in our proof. To the best of our knowledge, this work provides the first global existence result for semi-linear damped wave equations with the power nonlinearity of derivative type in exterior domains under fractional regularity assumptions. Among other things, the point we should underline in the this paper is that both the asymptotic profile and the optimal estimate for the time derivative of the derived global solutions to \eqref{eq:main} are also investigated well. 

Our main result is the following.

\begin{theorem} \label{Main.Theorem}
Let \(\Omega\subset\mathbb R^2\) be a $C^{1,1}$ exterior domain. Assume that $p>1$ and the nonlinear function $\mathcal{N}$ satisfies the same assumptions as in Section \ref{Pre.Sec}. Moreover, suppose that the initial data belong to the class
\[
u_0\in L^{\gamma(p)}(\Omega)\cap H^{\sigma+1}(\mathcal{A}),
\quad
u_1\in L^{\gamma(p)}(\Omega)\cap H^\sigma(\mathcal{A}),
\]
where $\sigma\in (1,\min\{2,p\})$ and
\[
\gamma(p):=
\begin{cases}
2/p &\text{ if } 1<p<2,\\
1 &\text{ if } p\ge2,
\end{cases}
\]
carrying the norm $\|(u_0, u_1)\|_{\rm Data}:= \|u_0\|_{L^{\gamma(p)}(\Omega)}+\|u_0\|_{H^{\sigma+1}(\mathcal{A})}+\|u_1\|_{L^{\gamma(p)}(\Omega)}+\|u_1\|_{H^{\sigma}(\mathcal{A})}$. Then, there exists a constant $\varepsilon_0 > 0$ such that for any small data $\|(u_0, u_1\|< \varepsilon_0$, we have a uniquely determined global (in time) small data mild solution
\[
u\in \mathcal{C}^1\Big([0,\infty);D(\mathcal{A}^{\sigma/2})\Big).
\]
to \eqref{eq:main} and the following estimates hold:
        \begin{equation}\label{Decay.Es}
        \begin{split}
            \|u_t(t,\cdot)\|_{L^2(\Omega)} &\lesssim (1+t)^{-\gamma}\|(u_0, u_1)\|_{\rm Data}, \\
            \|u_t(t,\cdot)\|_{\dot{H}^\sigma(\mathcal{A})} &\lesssim (1+t)^{-\eta}\|(u_0, u_1)\|_{\rm Data},
        \end{split}
        \end{equation}
where two positive constants $\gamma,\,\eta$ are defined as in Section \ref{Proof.Sec}. Moreover, the global obtained solutions to \eqref{eq:main} satisfy the following refined decay estimates:
\begin{equation}
\label{eq:ut-diffusion-limit}
\lim_{t\to\infty}
t^\gamma
\left\|
u_t(t)+\mathcal{A}e^{-t\mathcal{A}}(u_0+u_1+M_\infty)
\right\|_{L^2(\Omega)}
=
0,
\end{equation}
where
$$M_\infty
:=
\int_0^\infty \mathcal{N}[u_t](s)\,ds. $$
\end{theorem}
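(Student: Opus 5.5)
We argue by a contraction argument for the mild formulation, in a time-weighted space adapted to the decay of $u_t$. Writing the linear solution operators as $E_0(t),E_1(t)$, so that $u^{\rm lin}=E_0(t)u_0+E_1(t)u_1$, the mild solution is
\[
u(t)=u^{\rm lin}(t)+\int_0^t E_1(t-s)\mathcal N[u_t(s)]\,ds .
\]
Since $\mathcal N$ depends only on $u_t$, we run the fixed point on $v=u_t$, with
\[
\Phi[v](t)=\partial_tu^{\rm lin}(t)+\int_0^t \partial_tE_1(t-s)\mathcal N[v(s)]\,ds,
\]
and recover $u$ by integrating in time. The solution space is
\[
X=\mathcal C\big([0,\infty);D(\mathcal A^{\sigma/2})\big),
\]
with norm
\[
\|v\|_X=\sup_{t\ge0}\Big((1+t)^{\gamma}\|v(t)\|_{L^2}+(1+t)^{\eta}\|v(t)\|_{\dot H^\sigma(\mathcal A)}\Big).
\]
From the Matsumura-type estimates for the damped semigroup in exterior domains (the abstract framework recalled in Section \ref{Pre.Sec}), $\partial_tE_j(t)$ behaves like $\mathcal A e^{-t\mathcal A}$ plus an exponentially decaying wave part. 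The heat part is then handled by $L^r$–$L^2$ and gradient estimates for $e^{-t\mathcal A}$ in 2D, which give
\[
\|\mathcal A^{\theta}e^{-t\mathcal A}f\|_{L^2}\lesssim t^{-(1/r-1/2)-\theta}\|f\|_{L^r}.
\]
With $r=\gamma(p)$ this gives $\gamma=1/\gamma(p)-1/2+1$, namely $\gamma=p/2+1/2$ for $p<2$ and $\gamma=3/2$ for $p\ge2$, or a slightly smaller value. The $\dot H^\sigma$ rate $\eta$ comes out in the same way. The linear part is therefore bounded by $\|(u_0,u_1)\|_{\rm Data}$.

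For the nonlinear Duhamel term we split $\int_0^t=\int_0^{t/2}+\int_{t/2}^t$.
\begin{itemize}
\item On $[0,t/2]$ we use the $L^{\gamma(p)}\to L^2$ (respectively $\dot H^\sigma$) estimate. This needs
\[
\|\mathcal N[v(s)]\|_{L^{\gamma(p)}}=\|v(s)\|_{L^{p\gamma(p)}}^p ,
\]
which for $p<2$ is $\|v\|_{L^2}^p$. For $p\ge2$ we interpolate $L^p$ between $L^2$ and $\dot H^\sigma\hookrightarrow L^\infty$, which is available since $\sigma>1$ in 2D, via the Sobolev embedding for $\mathcal A$-based spaces.
\item On $[t/2,t]$ we use $L^2\to L^2$ and $\dot H^\sigma\to\dot H^\sigma$ bounds without singular time factors. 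This requires $\|\mathcal N[v]\|_{L^2}\lesssim\|v\|_{L^{2p}}^p$, handled by Gagliardo–Nirenberg, and $\|\mathcal N[v]\|_{\dot H^\sigma(\mathcal A)}$.
\end{itemize}

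The last quantity is the main obstacle. Here we invoke the fractional chain rule in the exterior domain, Proposition \ref{prop:chain}, to get
\[
\|\mathcal N[v]\|_{\dot H^{\sigma}(\mathcal A)}\lesssim\|v\|_{L^\infty}^{p-1}\|v\|_{\dot H^\sigma(\mathcal A)}.
\]
The condition $\sigma<\min\{2,p\}$ is exactly what makes $|z|^p$ sufficiently smooth and $\mathcal N[v]$ lie in $D(\mathcal A^{\sigma/2})$: for $1<\sigma<2$ this requires the vanishing trace of $\mathcal N[v]$, which is inherited from $v\in H^1_0$. The resulting time integrals $\int_0^t(1+t-s)^{-a}(1+s)^{-b}\,ds$ must be checked to close with the weights $\gamma,\eta$. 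This is where $p>1$ suffices: the decay of $\|v\|_{L^\infty}^{p-1}$ beats the loss, possibly at the cost of choosing $\eta$ or $\gamma$ with an $\varepsilon$-loss. Lipschitz estimates for $\Phi[v]-\Phi[w]$ follow identically, using $|\mathcal N[a]-\mathcal N[b]|\lesssim|a-b|(|a|+|b|)^{p-1}$ together with the difference version of the chain rule. For small data, Banach's fixed point theorem then gives $v$, and setting $u=u_0+\int_0^t v$ yields the solution together with \eqref{Decay.Es}; uniqueness also follows from the contraction.

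For \eqref{eq:ut-diffusion-limit}, first note that $\mathcal N[u_t]\in L^1(0,\infty;L^{\gamma(p)})$ by the decay just proved, so $M_\infty$ is well defined. Next we use the refined linear asymptotics
\[
t^\gamma\big\|\partial_tE_0(t)u_0+\partial_tE_1(t)u_1+\mathcal Ae^{-t\mathcal A}(u_0+u_1)\big\|_{L^2}\to0 ,
\]
obtained from the diffusion-wave comparison; for $L^{\gamma(p)}$ data, a density argument gives $o(t^{-\gamma})$. The Duhamel term is written as
\[
\int_0^t\partial_tE_1(t-s)\mathcal N(s)\,ds+\mathcal Ae^{-t\mathcal A}M_\infty ,
\]
and split into three pieces:
\begin{itemize}
\item $\int_0^{t/2}\big[\partial_tE_1(t-s)+\mathcal Ae^{-(t-s)\mathcal A}\big]\mathcal N(s)\,ds$, which is $o(t^{-\gamma})$ by the same linear asymptotics and dominated convergence;
\item $\int_0^{t/2}\mathcal A\big[e^{-t\mathcal A}-e^{-(t-s)\mathcal A}\big]\mathcal N(s)\,ds$, where the mean value theorem gives an extra factor $s/t$, and this is $o(t^{-\gamma})$ by dominated convergence;
\item the tail $\mathcal Ae^{-t\mathcal A}\int_{t/2}^\infty\mathcal N\,ds$ together with $\int_{t/2}^t$, both $o(t^{-\gamma})$ by the faster decay of $\mathcal N[u_t]$.
\end{itemize}
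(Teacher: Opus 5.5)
Your proposal has the paper's architecture and is correct in outline. The shared ingredients are the fixed point for $v=u_t$ in the same weighted space, Lemma \ref{lem:Mats}, the interpolation bound for $\|v\|_{L^\infty}$, Proposition \ref{prop:chain}, and the recovery $u=u_0+\int_0^t v$. Where you differ is in how you cut the Duhamel integrals, and that pays off in two places.

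\textbf{Existence part.} The paper never splits the time integral. It applies the $L^{\gamma(p)}\to L^2$ and $L^{\gamma(p)}\to\dot H^\sigma$ bounds on all of $[0,t]$ and absorbs the regularity loss into the $e^{-(t-s)/4}$ term. The piece near $s=t$ then only decays like $(1+t)^{-\beta}$, which is why the paper must take $\eta<\min\{\gamma+\sigma/2,\beta\}$. Your $\dot H^\sigma\to\dot H^\sigma$ bound on $[t/2,t]$ costs only $\log(2+t)\,(1+t)^{-(p-1)\delta-\eta}$. You can therefore take $\eta=\gamma+\sigma/2$, the linear rate. This matters for $p$ near $1$, where $\beta=p\gamma$ is barely above $\gamma$.

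\textbf{Profile part.} The paper subtracts $\mathcal Ae^{-(t-s)\mathcal A}$ on all of $[0,t]$. This forces three things: the three-interval splitting of $R_{\mathrm{dw}}$ and $R_{\mathrm{shift}}$, the smoothing bound $\|\mathcal Ae^{-s\mathcal A}f\|_{L^2}\le Cs^{-1+\sigma/2}\|\mathcal A^{\sigma/2}f\|_{L^2}$ together with \eqref{eq:F-Hsigma-decay} on $[t-1,t]$, and a case analysis in $\beta$ for $R^{(1)}_{\mathrm{shift}}$. You instead:
\begin{itemize}
\item compare with the heat flow only on $[0,t/2]$;
\item move $[t/2,\infty)$ into the tail;
\item estimate $\int_{t/2}^t\partial_tD(t-s,\mathcal A)\mathcal N[v](s)\,ds$ directly, which is indeed $O(t^{-\beta}+t^{-(p-1)\delta-\gamma})$ because $\gamma>1$;
\item control the shift term by $t^{-\gamma}\min\{1,s/t\}\|\mathcal N[v](s)\|_{L^{\gamma(p)}(\Omega)}$ and dominated convergence.
\end{itemize}
This is shorter and needs no $H^\sigma$ information on $\mathcal N[v]$. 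The density argument for the linear remainder is unnecessary, since \eqref{eq:linear-diffusion-estimate-used} already gives $O(t^{-\gamma-1})$.

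\textbf{Steps that need repair.}

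First, on $[t/2,t]$ mere boundedness of $\partial_tD(t-s,\mathcal A)$ on $L^2$ is not enough. It loses a factor $t$, which $\|v\|_{L^\infty}^{p-1}\lesssim(1+t)^{-(p-1)\delta}$ absorbs only if $(p-1)\delta\ge1$; this fails for $p$ near $1$. No $\varepsilon$-adjustment of $\gamma,\eta$ fixes a polynomial loss. You must use $\|\partial_tD(\tau,\mathcal A)f\|_{L^2}\le C(1+\tau)^{-1}\|f\|_{L^2}$, which is Lemma \ref{lem:Mats} with $q=2$, $k=1$, $s=0$, applied to $\mathcal A^{\sigma/2}\mathcal N[v]$ for the high norm. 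That leaves only the logarithm.

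Second, for $1<p<2$ the Lipschitz estimate does not \emph{follow identically}. Part ii) of Proposition \ref{prop:chain} carries $\|v\|_{L^\infty}^{p-2}$ with a negative exponent, and its proof uses $D^2\mathcal N$, which is assumed only for $p>2$. Worse, when $D\mathcal N$ is merely $(p-1)$-H\"older, $v\mapsto\mathcal N[v]$ is in general not locally Lipschitz into $D(\mathcal A^{\sigma/2})$. For example, take $\mathcal N(z)=|z|^{p-1}z$, $w=x_1\psi$ and $v=w+\epsilon\psi$, with $\psi\in\mathcal C_c^\infty(\Omega)$ equal to $1$ near a point of $\{x_1=0\}$. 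Then $\|v-w\|_{H^\sigma(\mathcal A)}\sim\epsilon$, while $\|\mathcal A^{\sigma/2}(\mathcal N[v]-\mathcal N[w])\|_{L^2}\gtrsim\epsilon^{p-\sigma+1/2}\gg\epsilon$ whenever $\sigma>p-\frac12$. The paper's Step 2 has the same defect.

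The standard repair is as follows.
\begin{itemize}
\item Keep the self-mapping estimate in the full $X$-norm.
\item Contract only in the metric $\sup_t(1+t)^\gamma\|v(t)-w(t)\|_{L^2(\Omega)}$. This needs only \eqref{eq:pointwise-diff}, H\"older's inequality and the $L^\infty$ bounds.
\item The ball $\{\|v\|_X\le M\}$ is closed for this metric because $\mathcal A^{\sigma/2}$ is closed, so $\|\mathcal A^{\sigma/2}\cdot\|_{L^2}$ is lower semicontinuous under $L^2$ convergence.
\item Continuity with values in $D(\mathcal A^{\sigma/2})$ is recovered afterwards from \eqref{eq:mild-v}.
\end{itemize}

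Finally, $\dot H^\sigma\hookrightarrow L^\infty$ is false as written. What you need is $\|v\|_{L^\infty}\lesssim\|v\|_{L^2}^{1-1/\sigma}\|\mathcal A^{\sigma/2}v\|_{L^2}^{1/\sigma}$.
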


\begin{remark}
From the statement of Theorem \ref{Main.Theorem}, we may claim that the the small data solution always exist globally for any $p>1$ in terms of studying the initial-boundary value problem for a semi-linear damped wave equation in $2$D exterior domains. This phenomenon been also appeared in \cite{DuongDao} concerning the corresponding Cauchy problem of \eqref{eq:main}.
\end{remark}

\begin{remark}
One recognizes that there does not exist the critical exponent $p_{\rm crit}>1$ for \eqref{eq:main} in which the power nonlinearity of derivative type $|u_t|^p$ is of interest. In a comparison with the usual power nonlinearity $|u|^p$, we recall the critical exponent $p_{\rm crit}=2$ when $n=2$. This observation tells us that the power nonlinearity of derivative type influences more strongly than the usual power nonlinearity with respect to an admissible range of power exponents for the global existence.
\end{remark}

\begin{remark}
    We want to stress out that the diffusion profile of the time derivative of solutions in \eqref{eq:ut-diffusion-limit} also give the following lower bound estimate:
    $$ \|u_t(t,\cdot)\|_{L^2(\Omega)} \ge C\|u_0+u_1+M_\infty\|_{L^1(\Omega)}(1+t)^{-\gamma}, $$
    provided that $u_0+u_1+M_\infty \not\equiv 0$ together with additional assumptions of $u_0,u_1,M_\infty\in L^1(\Omega)$, by the aid of linear estimates for heat semigroup. Linking this to the upper bound estimate \eqref{Decay.Es}, it means that the sharp decay estimate for $u_t$ is given by
    $$ \|u_t(t,\cdot)\|_{L^2(\Omega)} \sim (1+t)^{-\gamma}. $$
\end{remark}

\noindent\textbf{The organization of this paper is as follows:} In Section \ref{Pre.Sec}, we present preliminary knowledge as some notations, fractional chain rule associated with the Dirichlet Laplacian and Matsumura-type estimates for solutions to the corresponding linear equation of \eqref{eq:main}. Then, we are going to prove the main result in Section \ref{Sec.Proof} including the global (in time) existence of small data solutions as well as the large time behavior property of these global solutions to \eqref{eq:main}. Finally, several open problems will be proposed in Section \ref{Sec.Final}.

\section{Preliminaries}\label{Pre.Sec}

Throughout the paper, for $1\le q\le\infty$, we denote by $L^q(\Omega)$, the usual Lebesgue space with norm
$$
\|f\|_{L^q(\Omega)}
=
\begin{cases}
    \left(\displaystyle\int_\Omega |f(x)|^q,dx \right)^{1/q} &\text{ if } 1\le q<\infty, \\
    \operatorname*{ess,sup}_{x\in\Omega} |f(x)| &\text{ if } q= \infty.
\end{cases}
$$
We define the Dirichlet Laplacian through the theory of closed quadratic forms. Namely, let us consider the sesquilinear form
\begin{equation*}
\mathfrak a[u,v]
:=
\int_{\Omega}
\nabla u(x)\cdot\overline{\nabla v(x)}\,dx,
\qquad
u,v\in D(\mathfrak a):=H_0^1(\Omega).
\end{equation*}
This form \(\mathfrak a\) is densely defined, symmetric, nonnegative, and closed on \(L^2(\Omega)\). By the representation theorem for closed nonnegative forms, there
exists a unique nonnegative self-adjoint operator  \(\mathcal{A}\) on \(L^2(\Omega)\) such that
\begin{equation*}
\mathfrak a[u,v]
=
(\mathcal{A}u,v)_{L^2(\Omega)}
\end{equation*}
for every \(u\in D(\mathcal{A})\) and \(v\in H_0^1(\Omega)\), where
\begin{equation*}
D(\mathcal{A})
=
\left\{
u\in H_0^1(\Omega):
\text{there exists } f\in L^2(\Omega)
\text{ such that }
\mathfrak a[u,v]=(f,v)_{L^2(\Omega)}
\text{ for all }v\in H_0^1(\Omega)
\right\},
\end{equation*}
and \(\mathcal{A}u:=f\). The uniqueness of \(f\) follows from the density of \(H_0^1(\Omega)\) in \(L^2(\Omega)\). This operator is the Dirichlet realization of the negative Laplacian, and we write $\mathcal{A}=-\Delta_{D,\Omega}$. In particular, in the distributional sense we have
\[
\mathcal{A}u=-\Delta u,
\qquad
u\in D(\mathcal{A}).
\]
If \(\Omega\) is of class \(\mathcal{C}^{1,1}\), the elliptic regularity yields
\begin{equation*}
D(\mathcal{A})
=
H^2(\Omega)\cap H_0^1(\Omega),
\end{equation*}
with equivalence of the graph norm of \(\mathcal{A}\) and the \(H^2(\Omega)\)-norm on \(D(\mathcal{A})\).

Since \(\mathcal{A}\) is nonnegative and self-adjoint, the spectral theorem provides a projection-valued measure \(E_\mathcal{A}\) on \([0,\infty)\) such that
\begin{equation*}
\mathcal{A}
=
\int_{[0,\infty)}
\lambda\,dE_\mathcal{A}(\lambda).
\end{equation*}
For \(u\in L^2(\Omega)\), define the finite positive Borel measure
\[
\mu_u(B)
:=
\big\|E_\mathcal{A}(B)u\big\|_{L^2(\Omega)}^2,
\qquad
B\subset[0,\infty)
\]
for every Borel set \(B\). For \(r\ge0\), the fractional power
\(\mathcal{A}^{r/2}\) is defined by
\begin{equation*}
\mathcal{A}^{r/2}u
:=
\int_{[0,\infty)}
\lambda^{r/2}\,dE_\mathcal{A}(\lambda)u
\end{equation*}
on the domain
\begin{equation*}
D(\mathcal{A}^{r/2})
:=
\left\{
u\in L^2(\Omega):
\int_{[0,\infty)}
\lambda^r\,d\mu_u(\lambda)<\infty
\right\}.
\end{equation*}
Equivalently, it holds
\[
u\in D(\mathcal{A}^{r/2})
\quad\text{ if and only if }\quad
\int_{[0,\infty)}
\lambda^r
\,d\|E_\mathcal{A}(\lambda)u\|_{L^2(\Omega)}^2
<\infty
\]
and
\begin{equation*}
\big\|\mathcal{A}^{r/2}u\big\|_{L^2(\Omega)}^2
=
\int_{[0,\infty)}
\lambda^r\,d\mu_u(\lambda)
\end{equation*}
for every \(u\in D(\mathcal{A}^{r/2})\). Thus, \(\mathcal{A}^{r/2}\) is a densely defined, nonnegative, self-adjoint, and closed operator on \(L^2(\Omega)\). For \(r\ge0\), we define
\begin{equation*}
H^r(\mathcal{A})
:=
D(\mathcal{A}^{r/2})
\end{equation*}
and equip this space with the norm
\begin{equation*}
\|u\|_{H^r(\mathcal{A})}
:=
\|(\mathcal{I}+\mathcal{A})^{r/2}u\|_{L^2(\Omega)}.
\end{equation*}
By the spectral theorem, one sees that
\begin{equation*}
\|u\|_{H^r(\mathcal{A})}^2
=
\int_{[0,\infty)}
(1+\lambda)^r\,d\mu_u(\lambda),
\end{equation*}
together with the corresponding inner product
\begin{equation*}
(u,v)_{H^r(\mathcal{A})}
:=
\Big(
(\mathcal{I}+\mathcal{A})^{r/2}u,
(\mathcal{I}+\mathcal{A})^{r/2}v
\Big)_{L^2(\Omega)}.
\end{equation*}
Moreover, \(H^r(\mathcal{A})\) is a Hilbert space with respect to this inner product. 

For \(0<r<2\), \(D(\mathcal{A}^{r/2})\) is identified with the corresponding Dirichlet Sobolev space \(H^r_D(\Omega)\) and with the appropriate trace condition. Denoting the homogeneous H\"older space $\dot{\mathcal{C}}^\alpha(\mathbb R^2)$ for $0<\alpha<1$, whose seminorm is defined by
$$
\|f\|_{\dot{\mathcal{C}}^\alpha(\mathbb R^2)}
:=
\sup_{x\neq y}
\frac{|f(x)-f(y)|}
{|x-y|^\alpha}.
$$
We identify \(\mathbb C\) with \(\mathbb R^2\) and regard the nonlinear function \(\mathcal N\) as a mapping from \(\mathbb R^2\) into either
\(\mathbb R\) or \(\mathbb R^2\). Also, we introduce the following assumptions on the nonlinearity:

\begin{itemize}

\item $\mathcal N(0)=0$.

\item There exists a constant \(C_{\mathcal N}>0\) and \(p>1\) such that for all \(z_1,z_2\in\mathbb R^2\)
\begin{equation}
\label{eq:N-Lip}
|\mathcal N(z_1)-\mathcal N(z_2)|
\le
C_{\mathcal N}
\bigl(|z_1|+|z_2|\bigr)^{p-1}
|z_1-z_2|.
\end{equation}

\item Let \(1<\sigma<\min\{2,p\}\). We assume that $
\mathcal N\in \mathcal C^1(\mathbb R^2,\mathbb R^m)$ for $m\in\{1,2\},
$
and that there exists \(C_{\mathcal N}>0\) such that
\begin{equation}
\label{eq:DN-growth}
\big\|D\mathcal N(z)\big\|_{\mathcal C(\mathbb R^2,\mathbb R^m)}
\le
C_{\mathcal N}|z|^{p-1}
\end{equation}
for every \(z\in\mathbb R^2\). Moreover, \(D\mathcal N\) is assumed to be locally H\"older continuous
of order \(\sigma-1\) fulfilling
\begin{equation*}
\big\|D\mathcal N(z_1)-D\mathcal N(z_2)\big\|_{\mathcal C(\mathbb R^2,\mathbb R^m)}
\le
C_{\mathcal N}
\bigl(|z_1|+|z_2|\bigr)^{p-\sigma}
|z_1-z_2|^{\sigma-1}
\end{equation*}
for all \(z_1,z_2\in\mathbb R^2\).

\item If \(p>2\), we assume in addition that $\mathcal N\in \mathcal C^2(\mathbb R^2,\mathbb R^m)$
and for every \(z\in\mathbb R^2\) it holds
\begin{equation}
\label{eq:D2N-growth}
\big\|D^2\mathcal N(z)\big\|_{\mathcal C(\mathbb R^2,\mathbb R^m)}
\le
C_{\mathcal N}|z|^{p-2},
\end{equation}
where $D^2\mathcal N(z)\in
\mathcal C(\mathbb R^2,\mathbb R^m)$ denotes the second Fr\'echet derivative, regarded as a bounded bilinear mapping.
\end{itemize}
For later convenience, hereafter $C$ and $C_j$, with $j\in\N$, stand for suitable positive constants which may be changed from line to line.

\subsection{Fractional chain rule}

We first recall the fractional Leibniz rule on $\mathbb R^2$. This estimate will later be transferred to the setting of exterior domains via an extension argument.

\begin{lemma}[Fractional Leibniz rule on $\mathbb R^2$, see Lemma 2.7 in \cite{IkedaInuiWakasugi2026}]
\label{lem:Leibniz}
Let $\alpha\in (0,1)$. We assume
\[
f\in L^\infty(\mathbb R^2)\cap \dot{\mathcal{C}}^\alpha(\mathbb R^2)
\quad\text{ and }\quad
g\in \dot H^\alpha(\mathbb R^2)\cap L^2(\mathbb R^2),
\]
then there exists a positive constant $C$ depending only on $s$ such that
\begin{equation*}
\|fg\|_{\dot H^\alpha(\mathbb R^2)}
\le
C
\|f\|_{L^\infty(\mathbb R^2)}
\|g\|_{\dot H^\alpha(\mathbb R^2)}
+
C
\|f\|_{\dot{\mathcal{C}}^\alpha(\mathbb R^2)}
\|g\|_{L^2(\mathbb R^2)}.
\end{equation*}
\end{lemma}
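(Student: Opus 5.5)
We use the Gagliardo (difference-quotient) characterization of the homogeneous norm, valid for $\alpha\in(0,1)$:
\[
\|h\|_{\dot H^\alpha(\mathbb R^2)}^2 \simeq \int_{\mathbb R^2}\int_{\mathbb R^2}\frac{|h(x)-h(y)|^2}{|x-y|^{2+2\alpha}}\,dx\,dy .
\]
On the Fourier side this follows from Plancherel: for $h(\cdot+z)-h$ one gets $\int |e^{iz\cdot\xi}-1|^2|\hat h(\xi)|^2\,d\xi$, and integrating against $|z|^{-2-2\alpha}\,dz$ gives $c_\alpha|\xi|^{2\alpha}$.

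With this in hand we split the difference of the product as
\[
f(x)g(x)-f(y)g(y)=f(x)\bigl(g(x)-g(y)\bigr)+g(y)\bigl(f(x)-f(y)\bigr).
\]
The first term contributes at most $\|f\|_{L^\infty}^2\|g\|_{\dot H^\alpha}^2$ to the squared seminorm, which is immediate.

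The second term is the main step. Write $y=x+z$. We must control
\[
\int\!\!\int |g(x+z)|^2\,\frac{|f(x)-f(x+z)|^2}{|z|^{2+2\alpha}}\,dx\,dz ,
\]
and we split the $z$-integral at $|z|=1$.

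For $|z|\le1$, the Hölder bound gives $|f(x)-f(x+z)|^2\le \|f\|_{\dot{\mathcal C}^\alpha}^2|z|^{2\alpha}$. The weight then becomes $|z|^{-2}$, which fails to be integrable near $0$ in $\mathbb R^2$, only logarithmically, so the splitting must be refined. We therefore split at a radius $R$ and use the two available bounds on $f$ differently in the two regions:
\begin{itemize}
\item[(i)] For $|z|\le R$, use the Hölder seminorm interpolated with the sup norm: $|f(x)-f(x+z)|\le \|f\|_{\dot{\mathcal C}^\alpha}^{\theta}(2\|f\|_{L^\infty})^{1-\theta}|z|^{\alpha\theta}$.
\item[(ii)] For $|z|>R$, use $|f(x)-f(x+z)|^2\le 4\|f\|_\infty^2$. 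Here $|z|^{-2-2\alpha}$ is integrable at infinity, giving $C\|f\|_\infty^2R^{-2\alpha}\|g\|_{L^2}^2$.
\end{itemize}

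The interpolation in (i) still leaves a nonintegrable singularity at $z=0$ unless $\alpha\theta>\alpha$, which is impossible. This confirms that the term $\|f\|_{\dot{\mathcal C}^\alpha}\|g\|_{L^2}$ alone cannot control the small-$z$ region through pointwise bounds.

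The fix is to treat small $z$ by exploiting the regularity of $g$ instead. For $|z|\le R$, write
\[
g(y)\bigl(f(x)-f(y)\bigr)=\bigl(fg\bigr)(x)-\bigl(fg\bigr)(y)-f(x)\bigl(g(x)-g(y)\bigr),
\]
which is circular. A cleaner route is a Littlewood–Paley paraproduct decomposition $fg=T_fg+T_gf+R(f,g)$:
\begin{itemize}
\item $\|T_fg\|_{\dot H^\alpha}\lesssim\|f\|_\infty\|g\|_{\dot H^\alpha}$.
\item $\|T_gf\|_{\dot H^\alpha}^2\simeq\sum_j 2^{2j\alpha}\|S_{j-1}g\,\Delta_jf\|_2^2\le \sup_j\bigl(2^{2j\alpha}\|\Delta_jf\|_\infty^2\bigr)\sum_j\|S_{j-1}g\|_2^2$. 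This last sum diverges, so instead we bound $T_gf$ and $R(f,g)$ by splitting low frequencies, controlled by $\|f\|_{\dot{\mathcal C}^\alpha}\|g\|_2$ via $\|\Delta_jf\|_\infty\lesssim2^{-j\alpha}\|f\|_{\dot{\mathcal C}^\alpha}$, and high frequencies, absorbed into the $\|f\|_\infty\|g\|_{\dot H^\alpha}$ term.
\end{itemize}

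The main obstacle is precisely this logarithmic divergence in two dimensions. If it persists, we will combine the estimates with the inhomogeneous quantity $\|g\|_{L^2}+\|g\|_{\dot H^\alpha}$, as the cited Lemma 2.7 presumably does. Failing that, we simply invoke Lemma 2.7 of the cited reference, as the paper does.
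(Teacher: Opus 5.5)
Your proposal does not close. The Gagliardo attempt stalls, the paraproduct repair is only sketched, and the fallback is the citation, which is all the paper offers too. More importantly, the obstacle you found cannot be removed, because the inequality is false as stated.

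Your diagnosis is right, except that the divergence is not a two-dimensional effect. After the H\"older bound the weight is $|z|^{-n}$ in every dimension. This reflects the failure of the endpoint inclusion $L^\infty\cap\dot{\mathcal C}^\alpha\subset H^\alpha_{\mathrm{loc}}$.

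Here is a counterexample. Let $W(t)=\sum_{k\ge1}2^{-k\alpha}\cos(2^kt)$ and $f(x)=W(x_1)$, and take $g\in C_c^\infty(\mathbb R^2)$ with $g\equiv1$ on $Q=[-2\pi,2\pi]^2$. Since $|W(t)-W(s)|\le\sum_k2^{-k\alpha}\min\{2,2^k|t-s|\}\lesssim|t-s|^\alpha$, we have $f\in L^\infty\cap\dot{\mathcal C}^\alpha$, and the right-hand side is finite. But $fg=f$ on $Q$. Integrate out $x_2,y_2$ in the Gagliardo integral over $Q\times Q$; this leaves a kernel $\gtrsim|x_1-y_1|^{-1-2\alpha}$. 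Orthogonality of the lacunary terms then gives
\[
\|fg\|_{\dot H^\alpha(\mathbb R^2)}^2\gtrsim\int_{-\pi}^{\pi}\!\int_{-\pi}^{\pi}\frac{|W(t+h)-W(t)|^2}{|h|^{1+2\alpha}}\,dt\,dh
=\pi\sum_{k\ge1}2^{-2k\alpha}\int_{-\pi}^{\pi}\frac{|e^{i2^kh}-1|^2}{|h|^{1+2\alpha}}\,dh\ge c_\alpha\sum_{k\ge1}1=\infty .
\]

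In your paraproduct attempt, the step that fails is absorbing the high-frequency part of $T_gf=\sum_jS_{j-1}g\,\Delta_jf$ into $\|f\|_{L^\infty}\|g\|_{\dot H^\alpha}$. There the output frequency $2^j$ is carried entirely by $\Delta_jf$. The factor $S_{j-1}g$ is low-frequency and gains nothing from $\dot H^\alpha$. Meanwhile $\|\Delta_jf\|_{L^\infty}$ decays only like $2^{-j\alpha}$, which merely cancels the weight $2^{j\alpha}$; this is exactly why $\sum_j\|S_{j-1}g\|_{L^2}^2$ diverges. The resonant term $R(f,g)$ is harmless, since there $\sum_j\|\tilde\Delta_jg\|_{L^2}^2\lesssim\|g\|_{L^2}^2$. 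Switching to the inhomogeneous norm of $g$ does not help either, since the $g$ above is smooth and compactly supported.

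What is true, and what your splitting at radius $R$ already proves, is the version with a strictly larger H\"older exponent $\beta\in(\alpha,1)$. The region $|z|<R$ contributes $C\|f\|_{\dot{\mathcal C}^\beta}^2R^{2(\beta-\alpha)}\|g\|_{L^2}^2$, and the region $|z|>R$ contributes $C\|f\|_{L^\infty}^2R^{-2\alpha}\|g\|_{L^2}^2$. Optimizing in $R$ gives
\[
\|fg\|_{\dot H^\alpha(\mathbb R^2)}\le C\|f\|_{L^\infty}\|g\|_{\dot H^\alpha(\mathbb R^2)}+C\|f\|_{L^\infty}^{1-\alpha/\beta}\|f\|_{\dot{\mathcal C}^\beta}^{\alpha/\beta}\|g\|_{L^2(\mathbb R^2)} .
\]
Alternatively, in the paraproduct route you may replace $\dot{\mathcal C}^\alpha=\dot B^\alpha_{\infty,\infty}$ by $\dot B^\alpha_{\infty,2}$. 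Citing the reference, as both you and the paper do, cannot justify the inequality in the form written here.
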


\begin{proposition}[Fractional chain rule associated with the Dirichlet Laplacian]
\label{prop:chain}
Let $0<\sigma<2$ and $p>\sigma$. Assume that the nonlinear function $\mathcal{N}$ satisfies the above assumptions. Then, the following estimates hold:
\begin{itemize}
    \item[i)] If $f\in H^\sigma(\mathcal{A})\cap L^\infty(\Omega)$, then $\mathcal{N}[f]\in H^\sigma(\mathcal{A})$ and
    \begin{equation}
    \label{eq:chain}
    \big\|\mathcal{N}[f]\big\|_{H^\sigma(\mathcal{A})}
    \le
    C
    \|f\|_{L^\infty(\Omega)}^{p-1}
    \|f\|_{H^\sigma(\mathcal{A})}.
    \end{equation}
    \item[ii)] If $f,g\in D(\mathcal A^{\sigma/2})\cap L^\infty(\Omega)$, then $\mathcal N[f]-\mathcal N[g]\in D(\mathcal A^{\sigma/2})$
and
\begin{align}
&\bigl\|
\mathcal A^{\sigma/2}
\bigl(
\mathcal N[f]-\mathcal N[g]
\bigr)
\bigr\|_{L^2(\Omega)} \notag \\
&\quad\le
C
\left(
\|f\|_{L^\infty(\Omega)}^{p-1}
+
\|g\|_{L^\infty(\Omega)}^{p-1}
\right)
\bigl\|
\mathcal A^{\sigma/2}(f-g)
\bigr\|_{L^2(\Omega)}
\label{eq:Lip-chain-first}\\
&\qquad
+
C
\left(
\|f\|_{L^\infty(\Omega)}^{p-2}
+
\|g\|_{L^\infty(\Omega)}^{p-2}
\right)
\left(
\|\mathcal A^{\sigma/2}f\|_{L^2(\Omega)}
+
\|\mathcal A^{\sigma/2}g\|_{L^2(\Omega)}
\right)
\|v-w\|_{L^\infty(\Omega)}.
\notag
\end{align}
\end{itemize}
\end{proposition}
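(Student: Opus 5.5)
The plan is to reduce both estimates to the corresponding whole-space estimates through zero extension. The key observation is that for $\sigma\in(1,2)$ we have $\sigma-1\in(0,1)$, so one derivative can be handled classically and the remaining fractional derivative of order $\sigma-1$ by Lemma \ref{lem:Leibniz}.

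\textbf{Step 1: identify the norm.} For $0<\sigma<2$, the space $D(\mathcal A^{\sigma/2})$ coincides with the Dirichlet Sobolev space $H^\sigma_D(\Omega)$, with equivalent norms. For $\sigma\in(1,2)$, this space consists of $u\in H^\sigma(\Omega)\cap H^1_0(\Omega)$. Since $\partial\Omega\in\mathcal C^{1,1}$, a function in this class satisfies
\[
\|u\|_{H^\sigma(\mathcal A)}\simeq \|\tilde u\|_{L^2(\mathbb R^2)}+\|\nabla \tilde u\|_{\dot H^{\sigma-1}(\mathbb R^2)},
\]
where $\tilde u$ denotes the zero extension of $u$ to $\mathbb R^2$. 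This needs Hardy-type control of $u/\mathrm{dist}(x,\partial\Omega)^{\sigma-1}$ near the boundary when $\sigma-1\ne 1/2$. At the exceptional value $\sigma=3/2$ I would instead use a Stein extension together with interpolation of domains of $\mathcal A$.

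Because $\mathcal N(0)=0$, the zero extension commutes with the nonlinearity: $\widetilde{\mathcal N[f]}=\mathcal N[\tilde f]$. Moreover $\mathcal N[f]$ vanishes on $\partial\Omega$, so the Dirichlet trace condition is preserved. If $\sigma\le1$, the argument is the same but simpler: apply Lemma \ref{lem:Leibniz} style difference estimates directly to $\mathcal N[\tilde f]$, using \eqref{eq:N-Lip}.

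\textbf{Step 2: prove i).} The $L^2$ part is immediate from \eqref{eq:N-Lip} with $z_2=0$. For the derivative, write
\[
\nabla\mathcal N[\tilde f]=D\mathcal N(\tilde f)\nabla\tilde f,
\]
and apply Lemma \ref{lem:Leibniz} with $\alpha=\sigma-1$, taking $F=D\mathcal N(\tilde f)$ and $g=\nabla\tilde f$. By \eqref{eq:DN-growth}, $\|F\|_{L^\infty}\le C\|f\|_{L^\infty}^{p-1}$. The H\"older seminorm $\|F\|_{\dot{\mathcal C}^{\sigma-1}}$ is not directly controlled by $\|f\|_{L^\infty}$, however. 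So rather than quoting the Leibniz lemma verbatim, I would use the Gagliardo difference characterization of $\dot H^{\sigma-1}$ and split
\[
F(x)g(x)-F(y)g(y)=F(x)\bigl(g(x)-g(y)\bigr)+\bigl(F(x)-F(y)\bigr)g(y).
\]
The first piece is bounded by $\|f\|_\infty^{p-1}\|\nabla \tilde f\|_{\dot H^{\sigma-1}}$. For the second piece, the H\"older assumption on $D\mathcal N$ gives
\[
|F(x)-F(y)|\le C\|f\|_\infty^{p-\sigma}|\tilde f(x)-\tilde f(y)|^{\sigma-1}.
\]
This leaves the term
\[
\iint \frac{|\tilde f(x)-\tilde f(y)|^{2(\sigma-1)}|\nabla\tilde f(y)|^2}{|x-y|^{2+2(\sigma-1)}}\,dx\,dy,
\]
which I would bound by Hölder plus Gagliardo--Nirenberg, $\|\nabla\tilde f\|_{L^{2\sigma}}^{\sigma}\lesssim\|f\|_\infty^{\sigma-1}\|\tilde f\|_{\dot H^\sigma}$. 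This is the standard Christ--Weinstein/Kato–Ponce chain-rule argument, and it yields $\|f\|_\infty^{p-1}\|f\|_{H^\sigma}$. This step, handling the H\"older piece without assuming $f\in\mathcal C^{\sigma-1}$, is the main obstacle.

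\textbf{Step 3: prove ii).} Write
\[
\nabla(\mathcal N[f]-\mathcal N[g])=D\mathcal N(f)\nabla(f-g)+\bigl(D\mathcal N(f)-D\mathcal N(g)\bigr)\nabla g.
\]
The first term is treated exactly as in Step 2 and gives the first line of \eqref{eq:Lip-chain-first}. For the second term, when $p>2$ I would use \eqref{eq:D2N-growth} to get $|D\mathcal N(f)-D\mathcal N(g)|\lesssim(|f|+|g|)^{p-2}|f-g|$, and then repeat the difference-quotient argument. This produces the factor $(\|f\|_\infty^{p-2}+\|g\|_\infty^{p-2})\|f-g\|_\infty$ times $\|\mathcal A^{\sigma/2}f\|+\|\mathcal A^{\sigma/2}g\|$; here $\|v-w\|_{L^\infty}$ in the statement is read as $\|f-g\|_{L^\infty}$. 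For $\sigma<p\le2$ I would use the H\"older bound on $D\mathcal N$ instead, interpolating to obtain the same structure.

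Finally, returning from $\dot H^\sigma(\mathbb R^2)$ to $\|\mathcal A^{\sigma/2}\cdot\|_{L^2(\Omega)}$ uses the norm equivalence from Step 1. The passage from homogeneous to inhomogeneous norms uses $\|\mathcal A^{\sigma/2}u\|\lesssim\|u\|_{H^\sigma(\mathcal A)}$ together with the $L^2$ estimate.
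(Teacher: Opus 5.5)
Your proposal has genuine gaps; the first is in the step you yourself flag as the main obstacle, and it cannot be closed by H\"older plus Gagliardo--Nirenberg.

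\textbf{Part i).} After you insert $|F(x)-F(y)|\lesssim\|f\|_{L^\infty}^{p-\sigma}|\tilde f(x)-\tilde f(y)|^{\sigma-1}$, the remaining integral $\iint|\tilde f(x)-\tilde f(y)|^{2(\sigma-1)}|\nabla\tilde f(y)|^2|x-y|^{-2\sigma}\,dx\,dy$ equals $+\infty$ for every nonconstant smooth $f$. Near the diagonal, $|\tilde f(x)-\tilde f(y)|\approx|\nabla\tilde f(y)\cdot(x-y)|$, so the integrand behaves like $|\nabla\tilde f(y)|^{2\sigma}|x-y|^{-2}$, which is not integrable near $x=y$ in $\mathbb R^2$. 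A H\"older exponent of $D\mathcal N$ equal to the differentiability order $\sigma-1$ is exactly critical.

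Indeed, for $\sigma<p\le2$ the listed hypotheses do not imply i) at all. Let $\mathcal N(z)=\chi(z)\Phi(\mathrm{Re}\,z)$, with $\chi\in\mathcal C_c^\infty(\{1<|z|<2\})$, $\chi=1$ near $z=\tfrac32$, and $\Phi'$ a Weierstrass-type function of exact H\"older order $\sigma-1$. This $\mathcal N$ satisfies every assumption, because $|z_1|+|z_2|\ge1$ whenever the relevant difference is nonzero. Yet for $f=\tfrac32+x_1$ on a ball in $\Omega$ one has $\partial_1\mathcal N[f]\notin H^{\sigma-1}_{\rm loc}$.

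What makes your splitting work is a H\"older exponent $\alpha>\sigma-1$. For $p>2$ use the Lipschitz bound $|D\mathcal N(z_1)-D\mathcal N(z_2)|\lesssim(|z_1|+|z_2|)^{p-2}|z_1-z_2|$ coming from \eqref{eq:D2N-growth}. For the model nonlinearities $|z|^p$ and $|z|^{p-1}z$ use $\alpha=\min\{p-1,1\}$. Then the diagonal singularity becomes $|x-y|^{2\alpha-2\sigma}$, which is integrable, and the Christ--Weinstein argument with Gagliardo--Nirenberg closes. The paper's proof passes over exactly this point when it bounds $\|D\mathcal N[f]\|_{\dot{\mathcal C}^{\sigma-1}}\|\nabla f\|_{L^2}$ by $\|f\|_{L^\infty}^{p-1}\|f\|_{H^\sigma}$, so you were right not to follow it.

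Independently, your Step 1 fails for every $\sigma\ge\tfrac32$, not only at $\sigma=\tfrac32$. Elements of $D(\mathcal A^{\sigma/2})$ need not satisfy $\partial_\nu u=0$, so $\nabla\tilde u$ jumps across $\partial\Omega$ and lies in $H^s$ only for $s<\tfrac12$. Use instead an extension operator $\mathcal E$ bounded on both $H^\sigma$ and $L^\infty$ (e.g.\ Stein's), as the paper does. Then $\mathcal N[\mathcal Ef]|_\Omega=\mathcal N[f]$, and the trace condition follows from $\mathcal N(0)=0$.

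\textbf{Part ii).} For the term $(D\mathcal N(f)-D\mathcal N(g))\nabla g$ you need a $\dot H^{\sigma-1}$-type bound on $G=D\mathcal N(f)-D\mathcal N(g)$ that still carries a factor $f-g$, i.e.\ control of $G(x)-G(y)$. The pointwise bound on $|G|$ only gives $\|G\|_{L^\infty}$. Extracting $f-g$ from $G(x)-G(y)$ needs a modulus of continuity of $D^2\mathcal N$, which is not assumed. For $\sigma<p\le2$ the H\"older hypothesis yields only $|f-g|^{\sigma-1}$, and no interpolation converts this into the linear factor $\|f-g\|_{L^\infty}$.

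This is not merely technical: ii) as stated fails for $\mathcal N(z)=|z|^p$ whenever $\sigma<p<\sigma+\tfrac12$. Take $g=x_1$ and $f=g+\epsilon$ on a ball in $\Omega$ centred on $\{x_1=0\}$, cut off smoothly. There $\partial_1(\mathcal N[f]-\mathcal N[g])=p\epsilon^{p-1}K(x_1/\epsilon)$ with $K(y)=|y+1|^{p-2}(y+1)-|y|^{p-2}y$, a nonzero element of $\dot H^{\sigma-1}(\mathbb R)$ in this range. Scaling then gives a left-hand side $\gtrsim\epsilon^{p-\sigma+1/2}$, whereas the right-hand side is $O(\epsilon)$.

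Moreover, ii) involves only homogeneous quantities. Reducing to $\|\cdot\|_{H^\sigma(\mathcal A)}$, as you propose, introduces $L^2$ terms that are not on its right-hand side. The comparison $\|u\|_{\dot H^\sigma}\lesssim\|\mathcal A^{\sigma/2}u\|_{L^2(\Omega)}$ that would avoid them is false on 2D exterior domains. Outside a disk of radius $r_0$, take $u_R=\log(|x|/r_0)\psi(|x|/R)$, with $\psi$ a fixed cutoff equal to $1$ on $[0,1]$. Then $\|\mathcal A^{\sigma/2}u_R\|_{L^2}\le\|\nabla u_R\|_{L^2}^{2-\sigma}\|\Delta u_R\|_{L^2}^{\sigma-1}\lesssim(\log R)^{1-\sigma/2}(R^{-1}\log R)^{\sigma-1}\to0$, while the $H^\sigma$ seminorm of $u_R$ near $\partial\Omega$ does not depend on $R$. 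The paper's proof of ii) asserts this equivalence without proof.
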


\begin{proof}
\textit{At first, let us verify the estimate \eqref{eq:chain}}. Namely, we take an extension operator \(\mathcal{E}\) satisfying $\mathcal{E}:H^\sigma_D(\Omega)\cap L^\infty(\Omega)
\to
H^\sigma(\mathbb R^2)\cap L^\infty(\mathbb R^2)$ and
\[
\big\|\mathcal{E}[f]\big\|_{H^\sigma(\mathbb R^2)}
\le C\|f\|_{H^\sigma_D(\Omega)},
\qquad
\big\|\mathcal{E}[f]\big\|_{L^\infty(\mathbb R^2)}
\le C\|f\|_{L^\infty(\Omega)}.
\]
Since $\|f\|_{H^\sigma(\mathcal{A})}
\sim
\|f\|_{H^\sigma(\Omega)}$, it suffices to prove
$$
\big\|\mathcal{N}[f]\big\|_{H^\sigma(\mathbb R^2)}
\le
C
\|f\|_{L^\infty(\mathbb R^2)}^{p-1}
\|f\|_{H^\sigma(\mathbb R^2)}.$$
For $0<\sigma<1$, using the Gagliardo characterization one has
$$
[f]_{\dot H^\sigma(\mathbb R^2)}^2
\sim
\iint_{\mathbb R^2\times\mathbb R^2}
\frac{|f(x)-f(y)|^2}{|x-y|^{2+2\sigma}}
\,dx\,dy.
$$
By \eqref{eq:N-Lip}, we obtain
$$
\big|\mathcal{N}[f(x)]-\mathcal{N}[f(y)]\big|
\le
C
\|f\|_{L^\infty(\mathbb R^2)}^{p-1}
|f(x)-f(y)|,
$$
which gives
$$
\big[\mathcal{N}[f]\big]_{\dot H^\sigma(\mathbb R^2)}
\le
C
\|f\|_{L^\infty(\mathbb R^2)}^{p-1}
[f]_{\dot H^\sigma(\mathbb R^2)}.
$$
Since $\mathcal{N}(0)=0$, it follows from \eqref{eq:N-Lip} that
$$
\big\|\mathcal{N}[f]\big\|_{L^2(\mathbb R^2)}
\le
C
\|f\|_{L^\infty(\mathbb R^2)}^{p-1}
\|f\|_{L^2(\mathbb R^2)},
$$
thus
$$
\big\|\mathcal{N}[f]\big\|_{H^\sigma(\mathbb R^2)}
\le
C
\|f\|_{L^\infty(\mathbb R^2)}^{p-1}
\|f\|_{H^\sigma(\mathbb R^2)}.
$$
Next, for $1<\sigma<2$ we put $\alpha=\sigma-1$. Since $\mathcal{N}$ is regarded as a $\mathcal{C}^1$ map from $\mathbb R^2$ to $\mathbb R^2$, we have
$$
\nabla \mathcal{N}[f] = D\mathcal{N}[f]\nabla f.
$$
By Lemma~\ref{lem:Leibniz}, we may estimate
\begin{align*}
\big\|\nabla \mathcal{N}[f]\big\|_{\dot H^\alpha(\mathbb R^2)}
&\le
C
\big\|D\mathcal{N}[f]\big\|_{L^\infty(\mathbb R^2)}
\|\nabla f\|_{\dot H^\alpha(\mathbb R^2)}
+
C
\big\|D\mathcal{N}[f]\big\|_{\dot{\mathcal{C}}^\alpha(\mathbb R^2)}
\|\nabla f\|_{L^2(\mathbb R^2)} \\
&\le
C
\|f\|_{L^\infty(\mathbb R^2)}^{p-1}
\|f\|_{H^\sigma(\mathbb R^2)}
\end{align*}
together with
$$
\big\|\mathcal{N}[f]\big\|_{L^2(\mathbb R^2)}
\le
C
\|f\|_{L^\infty(\mathbb R^2)}^{p-1}
\|f\|_{L^2(\mathbb R^2)},
$$
to arrive at
$$
\big\|\mathcal{N}[f]\big\|_{H^\sigma(\mathbb R^2)}
\le
C
\|f\|_{L^\infty(\mathbb R^2)}^{p-1}
\|f\|_{H^\sigma(\mathbb R^2)}.
$$
In this way, turning to $\Omega$ we may conclude the estimate \eqref{eq:chain} to complete our proof. \medskip

\textit{Next, let us verify the estimate \eqref{eq:Lip-chain-first}}. Indeed, we divide the proof into two steps as follows. \medskip

\noindent\textbf{$\bullet$ Step 1:} Setting $h:=f-g$ we rewrite the nonlinear difference in the form
\begin{align*}
\mathcal{N}[f(x)]-\mathcal{N}[g(x)]
=
\int_0^1
\frac{d}{d\theta}
\mathcal{N}[g(x)+\theta h(x)]
\,d\theta
&=
\int_0^1
D\mathcal{N}[g(x)+\theta h(x)]
h(x)\,d\theta \\
&= \int_0^1 D\mathcal{N}[h_\theta(x)]\, h(x)\,d\theta
\end{align*}
with $
h_\theta:=g+\theta h=(1-\theta)g+\theta f$. Then, applying the fractional Leibniz rule we obtain
\begin{equation}
\label{eq:product-B-h}
\big\|D\mathcal{N}[h_\theta] h\big\|_{\dot H^\sigma(\Omega)}
\le
C\big\|D\mathcal{N}[h_\theta]\big\|_{L^\infty(\Omega)}
\|h\|_{\dot H^\sigma(\Omega)}
+
C\|h\|_{L^\infty(\Omega)}
\big\|D\mathcal{N}[h_\theta]\big\|_{\dot H^\sigma(\Omega)}
\end{equation}
for \(1<\sigma<2\). Let us now estimate \(D\mathcal{N}[h_\theta]\) in \(L^\infty(\Omega)\) and \(\dot H^\sigma(\Omega)\). In particular, by \eqref{eq:DN-growth} one has
\[
\big|D\mathcal{N}[h_\theta(x)]\big|
\le
C|h_\theta(x)|^{p-1}
\le
C_p
\left(
|f(x)|^{p-1}+|g(x)|^{p-1}
\right),
\]
which leads to
\begin{equation}
\label{eq:Btheta-Linfty}
\big\|D\mathcal{N}[h_\theta]\big\|_{L^\infty(\Omega)}
\le
C
\left(
\|f\|_{L^\infty(\Omega)}^{p-1}
+
\|g\|_{L^\infty(\Omega)}^{p-1}
\right).
\end{equation}
Substituting \eqref{eq:Btheta-Linfty} into the first term of
\eqref{eq:product-B-h}, we obtain
\begin{align}
&\big\|D\mathcal{N}[h_\theta]\big\|_{L^\infty(\Omega)}
\|h\|_{\dot H^\sigma(\Omega)} \le
C
\left(
\|f\|_{L^\infty(\Omega)}^{p-1}
+
\|g\|_{L^\infty(\Omega)}^{p-1}
\right)
\|f-g\|_{\dot H^\sigma(\Omega)},
\label{eq:first-product-term}
\end{align}
which gives the first term on the right-hand side of
\eqref{eq:Lip-chain-first}. On the other hand, it holds from the standard Sobolev composition estimate that
\begin{equation}
\label{eq:composition-DN}
\big\|D\mathcal{N}[h_\theta]\big\|_{\dot H^\sigma(\Omega)}
\le
C
\big\|D^2\mathcal{N}[h_\theta]\big\|_{L^\infty(\Omega)}
\|h_\theta\|_{\dot H^\sigma(\Omega)}.
\end{equation}
By \eqref{eq:D2N-growth}, we derive
\[
\big|D^2\mathcal{N}[h_\theta(x)]\big|
\le
C|h_\theta(x)|^{p-2}
\le
C_p
\left(
|f(x)|^{p-2}+|g(x)|^{p-2}
\right)
\]
since \(p\ge2\). Thus, it follows
\begin{equation*}
\big\|D^2\mathcal N[h_\theta]\big\|_{L^\infty(\Omega)}
\le
C
\left(
\|f\|_{L^\infty(\Omega)}^{p-2}
+
\|g\|_{L^\infty(\Omega)}^{p-2}
\right),
\end{equation*}
which implies from \eqref{eq:composition-DN} that
\begin{align*}
\big\|D\mathcal{N}[h_\theta]\big\|_{\dot H^\sigma(\Omega)}
&\le
C
\left(
\|f\|_{L^\infty(\Omega)}^{p-2}
+
\|g\|_{L^\infty(\Omega)}^{p-2}
\right) \left(
\|f\|_{\dot H^\sigma(\Omega)}
+
\|g\|_{\dot H^\sigma(\Omega)}
\right)
\end{align*}
thanks to the triangle inequality $\|h_\theta\|_{\dot H^\sigma(\Omega)} \le
\|f\|_{\dot H^\sigma(\Omega)}
+
\|g\|_{\dot H^\sigma(\Omega)}$. Therefore, the second term of \eqref{eq:product-B-h} is controlled by
\begin{align}
&\|h\|_{L^\infty(\Omega)}
\big\|D\mathcal{N}[h_\theta]\big\|_{\dot H^\sigma(\Omega)}
\le
C
\left(
\|f\|_{L^\infty(\Omega)}^{p-2}
+
\|g\|_{L^\infty(\Omega)}^{p-2}
\right)
\left(
\|f\|_{\dot H^\sigma(\Omega)}
+
\|g\|_{\dot H^\sigma(\Omega)}
\right)
\|f-g\|_{L^\infty(\Omega)}.
\label{eq:second-product-term}
\end{align}
This is exactly the second term required in
\eqref{eq:Lip-chain-first}. \medskip

\noindent\textbf{$\bullet$ Step 2:} Combining \eqref{eq:product-B-h},
\eqref{eq:first-product-term} and
\eqref{eq:second-product-term}, we obtain the following uniform estimate for \(\theta\in[0,1]\):
\begin{align*}
\big\|D\mathcal{N}[h_\theta] h\big\|_{\dot H^\sigma(\Omega)}
&\le
C
\left(
\|f\|_{L^\infty(\Omega)}^{p-1}
+
\|g\|_{L^\infty(\Omega)}^{p-1}
\right)
\|f-g\|_{\dot H^\sigma(\Omega)}
\notag\\
&\quad+
C
\left(
\|f\|_{L^\infty(\Omega)}^{p-2}
+
\|g\|_{L^\infty(\Omega)}^{p-2}
\right) \Big(
\|f\|_{\dot H^\sigma(\Omega)}
+
\|g\|_{\dot H^\sigma(\Omega)}
\Big)
\|f-g\|_{L^\infty(\Omega)}.
\end{align*}
Applying Minkowski's integral inequality we arrive at
\begin{align}
\big\|\mathcal N[f]-\mathcal N[g]\big\|_{\dot H^\sigma(\Omega)}
&=
\left\|
\int_0^1 D\mathcal{N}[h_\theta] h\,d\theta
\right\|_{\dot H^\sigma(\Omega)}
\le
\int_0^1
\big\|D\mathcal{N}[h_\theta] h\big\|_{\dot H^\sigma(\Omega)}
\,d\theta \notag \\
&\le
C
\left(
\|f\|_{L^\infty(\Omega)}^{p-1}
+
\|g\|_{L^\infty(\Omega)}^{p-1}
\right)
\|f-g\|_{\dot H^\sigma(\Omega)} \notag \\
&\qquad+
C
\left(
\|f\|_{L^\infty(\Omega)}^{p-2}
+
\|g\|_{L^\infty(\Omega)}^{p-2}
\right) \left(
\|f\|_{\dot H^\sigma(\Omega)}
+
\|g\|_{\dot H^\sigma(\Omega)}
\right)
\|f-g\|_{L^\infty(\Omega)}.
\label{eq:whole-Sobolev-difference}
\end{align}
Finally, let us turn to the Dirichlet Laplacian to get the equivalent relation
\begin{equation*}
\|f\|_{\dot H_D^\sigma(\Omega)}
\sim
\|\mathcal A^{\sigma/2}f\|_{L^2(\Omega)}
\end{equation*}
for \(1<\sigma<2\). Consequently, one obtains the following estimates:
\begin{align*}
    \|f-g\|_{\dot H_D^\sigma(\Omega)}
&\lesssim
\big\|\mathcal A^{\sigma/2}(f-g)\big\|_{L^2(\Omega)}, \\
\|f\|_{\dot H_D^\sigma(\Omega)}
+
\|g\|_{\dot H_D^\sigma(\Omega)}
&\lesssim
\big\|\mathcal A^{\sigma/2}f\big\|_{L^2(\Omega)}
+
\big\|\mathcal A^{\sigma/2}g\big\|_{L^2(\Omega)}.
\end{align*}
In this way, plugging the two previous estimates into \eqref{eq:whole-Sobolev-difference} we may conclude the estimate \eqref{eq:Lip-chain-first}. Hence, our proof is established.
\end{proof}

\subsection{Linear estimates}
In this subsection, we collect some decay estimates for solutions to the corresponding linear damped wave equation. We denote by \(D(t,\mathcal{A})\), the solution operator to
\begin{equation}\label{eq:linear-D}
\begin{cases}
U_{tt}+U_t+ \mathcal{A}U=0, &(t,x)\in(0,\infty)\times\Omega,\\
U(t,x)=0, &(t,x)\in(0,\infty)\times \partial\Omega, \\
U(0,x)=0,\quad U_t(0,x)=f(x), & x\in \Omega,
\end{cases}
\end{equation}
to get the representation formula of solutions as follows: $U(t)=D(t,\mathcal{A})f$. At first, let us recall some decay estimates and the large time behavior of solutions to \eqref{eq:linear-D}, which plays key ingredients in our proof.

\begin{lemma}[Matsumura-type estimates, see \cite{IkedaSobajimaTaniguchiWakasugi2024}]\label{lem:Mats}
Let \(k=0,1\), \(q\in[1,2]\), \(s\ge0\), and assume that
\[
f\in L^q(\Omega)\cap H^{[k+s-1]^+}(\mathcal{A}).
\]
Then, there exists \(C>0\) such that the following estimates hold:
\begin{equation}\label{eq:Mats}
\big\|\partial_t^k \mathcal{A}^{s/2}D(t,\mathcal{A})f\big\|_{L^2(\Omega)}
\le
C(1+t)^{-\left(\frac1q-\frac12\right)-k-\frac{s}{2}}
\left(
\|f\|_{L^q(\Omega)}
+
e^{-\frac{t}{4}}\|f\|_{H^{[k+s-1]^+}(\mathcal{A})}
\right)
\end{equation}
for all \(t>0\), and
\begin{equation}
\label{eq:linear-diffusion-estimate-used}
\left\|
\partial_t^k A^{s/2}
\Big(D(t,\mathcal{A})-e^{-t\mathcal{A}}\Big)f
\right\|_{L^2(\Omega)}
\le
C(1+t)^{-\left(\frac1q-\frac12\right)-k-\frac{s}{2}-1}
\left(
\|f\|_{L^q(\Omega)}
+
e^{-\frac{t}{4}}\|f\|_{H^{k+s-1}(\mathcal{A})}
\right)
\end{equation}
for \(t\ge1\).
\end{lemma}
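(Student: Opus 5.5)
The statement to prove is Lemma \ref{lem:Mats}, and I would derive it through the spectral calculus of $\mathcal{A}$. For the solution to \eqref{eq:linear-D} we have
\[
D(t,\mathcal{A})=e^{-t/2}\,\frac{\sinh\big(t\sqrt{1/4-\mathcal{A}}\big)}{\sqrt{1/4-\mathcal{A}}},
\]
with the obvious interpretation via $\sin$ when $\mathcal{A}>1/4$. The plan is to split the spectral measure with a cutoff, writing $f=E_\mathcal{A}([0,1/8])f+E_\mathcal{A}((1/8,\infty))f$, and treat the two pieces separately.

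\textbf{High-frequency part.} On $\lambda>1/8$ the symbol $\partial_t^k\lambda^{s/2}D(t,\lambda)$ is bounded by $Ce^{-t/4}(1+\lambda)^{(k+s-1)/2}$, uniformly in $t$. The loss of one derivative relative to $k+s$ comes from the factor $1/\sqrt{\cdot}$. Since the real part of the exponent is at most $-t/2+\sqrt{1/4-1/8}\,t$, we get a rate strictly below $-1/4$ after adjusting the cutoff. By the spectral theorem this gives the $e^{-t/4}\|f\|_{H^{[k+s-1]^+}(\mathcal{A})}$ term, and all polynomial weights are absorbed into the exponential.

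\textbf{Low-frequency part.} On $\lambda\le 1/8$ I would expand
\[
D(t,\lambda)=\frac{e^{-t\mu_+(\lambda)}-e^{-t\mu_-(\lambda)}}{\sqrt{1-4\lambda}},\qquad \mu_\pm=\tfrac12\pm\tfrac12\sqrt{1-4\lambda}.
\]
Here $\mu_-(\lambda)=\lambda+\lambda^2+O(\lambda^3)$, while $\mu_+\ge 1/2$, so the $\mu_+$ term is exponentially decaying and harmless. The main term is $(1-4\lambda)^{-1/2}e^{-t\mu_-(\lambda)}$, which I compare with $e^{-t\lambda}$. The difference is
\[
\big[(1-4\lambda)^{-1/2}-1\big]e^{-t\mu_-}+\big(e^{-t\mu_-}-e^{-t\lambda}\big),
\]
which is bounded by $C(\lambda+t\lambda^2)e^{-ct\lambda}$. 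Differentiating in $t$ multiplies by $\mu_-\sim\lambda$, so every term becomes a bounded function $m(\lambda)$ times $\lambda^{k+s/2}e^{-ct\lambda}$, with one extra power of $\lambda$ in the difference estimate. This extra power is exactly the additional $(1+t)^{-1}$ in \eqref{eq:linear-diffusion-estimate-used}.

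\textbf{Reduction to heat estimates.} For $q<2$ I cannot apply the spectral theorem directly, so the $L^q\to L^2$ gain must come from the heat semigroup. The key input is the $L^q$–$L^2$ estimate
\[
\|\mathcal{A}^{\beta}e^{-t\mathcal{A}}g\|_{L^2(\Omega)}\le Ct^{-(1/q-1/2)-\beta}\|g\|_{L^q(\Omega)}\quad\text{for exterior domains in 2D},
\]
which follows from Gaussian upper bounds for the Dirichlet heat kernel together with analyticity. I would factor each low-frequency multiplier as
\[
m(\lambda)\,\lambda^{\beta}e^{-ct\lambda}=\big[m(\lambda)e^{-ct\lambda/2}\mathbf 1_{\lambda\le 1/8}\big]\cdot\lambda^{\beta}e^{-ct\lambda/2}.
\]
The first factor is bounded on $L^2$, and the second is handled by the heat estimate with $t$ replaced by $ct/2$. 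The factor $e^{-t\mu_+}$ gives pure exponential decay, combined with $\|E_\mathcal{A}([0,1/8])f\|_{L^2}\lesssim\|e^{-\mathcal{A}}f\|_{L^2}\lesssim\|f\|_{L^q}$. For $t\le 1$ I would simply use the high-frequency bound together with $L^q\cap H\subset$ the needed spaces, replacing $t$ by $1+t$.

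\textbf{Main obstacle.} The main obstacle is this last transfer. The spectral projection $E_\mathcal{A}([0,1/8])$ is not known to be bounded on $L^q$, so it must always be composed with heat factors, never applied to $f$ directly. The whole argument therefore rests on the Gaussian bounds for the Dirichlet heat kernel on the $\mathcal C^{1,1}$ exterior domain. In 2D these bounds hold without extra logarithmic loss because we only need upper bounds, by domination by the free heat kernel. Since the lemma is cited from \cite{IkedaSobajimaTaniguchiWakasugi2024}, these semigroup facts can be taken from there.
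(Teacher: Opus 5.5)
The paper does not prove Lemma~\ref{lem:Mats}; it imports the lemma from the work of Ikeda--Sobajima--Taniguchi--Wakasugi, which rests on the abstract Matsumura-type framework of Ikeda--Taniguchi--Wakasugi. Your spectral-calculus argument is therefore a genuinely different, self-contained route, and it is sound in substance.

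Its key device is the right one. You never apply a spectral projection to $f$ in $L^q$; instead you factor each bounded multiplier through $\mathcal{A}^{\beta}e^{-ct\mathcal{A}}$. As a result, only two inputs are used: the $L^2$ spectral theorem, and the $L^q$--$L^2$ heat bound, which comes from domination of the Dirichlet kernel by the free kernel.

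Your low-frequency analysis is also correct. Since $\mu_-(\lambda)\ge\lambda$ and $t\lambda^2e^{-t\lambda}\le C\lambda e^{-t\lambda/2}$, one gets $|\partial_t^k\lambda^{s/2}(D(t,\lambda)-e^{-t\lambda})|\le C\lambda^{k+s/2+1}e^{-t\lambda/2}+Ce^{-t/2}$ on the low band. This is exactly where the extra $(1+t)^{-1}$ comes from.

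The comparison is simple. The citation buys brevity. Your route buys transparency about precisely which heat-semigroup input is needed.

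Three repairs are needed.

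\textbf{The cutoff.} The cutoff $1/8$ does not give the rate $e^{-t/4}$. On $(1/8,1/4]$ the symbol decays only like $e^{-(1/2-\sqrt{1/8})t}$, roughly $e^{-0.146t}$. Take $\lambda_0\in(3/16,1/4)$. Then $\sqrt{1/4-\lambda_0}<1/4$, which leaves room for the polynomial prefactor, while $(1-4\lambda)^{-1/2}$ stays bounded on $[0,\lambda_0]$.

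\textbf{A sign.} With your $\mu_\pm$, one has $D(t,\lambda)=(e^{-t\mu_-}-e^{-t\mu_+})/\sqrt{1-4\lambda}$. Your formula has the opposite sign; this is harmless.

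\textbf{The high-frequency heat term.} Your high-frequency paragraph treats only $D$. In \eqref{eq:linear-diffusion-estimate-used}, however, the high-frequency part of $e^{-t\mathcal{A}}f$ must also be bounded. That part decays only like $e^{-\lambda_0 t}$ with $\lambda_0<1/4$, so it cannot be absorbed into $e^{-t/4}\|f\|_{H^{k+s-1}(\mathcal{A})}$. Send it to the $L^q$ term using your own factorization. Since $\lambda>\lambda_0$ there,
\[
\lambda^{k+s/2}e^{-t\lambda}\mathbf{1}_{\{\lambda>\lambda_0\}}
=\big[\lambda^{-1}e^{-t\lambda/2}\mathbf{1}_{\{\lambda>\lambda_0\}}\big]\,\lambda^{k+s/2+1}e^{-t\lambda/2},
\]
and the bracket has $L^2$ operator norm at most $\lambda_0^{-1}$. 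This gives the bound $Ct^{-(\frac1q-\frac12)-k-\frac s2-1}\|f\|_{L^q(\Omega)}$ for $t\ge1$, as required.
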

Next, let us write the solutions to
$$
\begin{cases}
u_{tt}+u_t+ \mathcal{A}u= 0, &(t,x)\in(0,\infty)\times\Omega,\\
u(t,x)=0, &(t,x)\in(0,\infty)\times \partial\Omega, \\
u(0,x)=u_0(x),\quad u_t(0,x)=u_1(x), & x\in \Omega.
\end{cases}
$$
by
\[
u_{\rm lin}(t)
=
\partial_tD(t,\mathcal{A})u_0+D(t,\mathcal{A})(u_0+u_1)
\]
to derive
\[
v_{\rm lin}(t):=\partial_tu_{\rm lin}(t)
=
-\mathcal{A}D(t,\mathcal{A})u_0+\partial_tD(t,\mathcal{A})u_1.
\]

\begin{proposition}
\label{lem:vlin}
Let \(q\in[1,2]\) and \(1<\sigma<2\). Assume that the initial data satisfy
\[
u_0\in L^q(\Omega)\cap H^{\sigma+1}(\mathcal{A}),
\qquad
u_1\in L^q(\Omega)\cap H^\sigma(\mathcal{A}).
\]
Then, there exists \(C>0\) such that for all \(t>0\) the following estimates hold:
\begin{equation}\label{eq:vlin-L2}
\|v_{\rm lin}(t)\|_{L^2(\Omega)}
\le
C(1+t)^{-\left(\frac1q+\frac12\right)}
\left(
\|u_0\|_{L^q(\Omega)}
+\|u_0\|_{H^1(\mathcal{A})}
+\|u_1\|_{L^q(\Omega)}
+\|u_1\|_{L^2(\Omega)}
\right)
\end{equation}
and
\begin{equation}\label{eq:vlin-high}
\big\|\mathcal{A}^{\sigma/2}v_{\rm lin}(t)\big\|_{L^2(\Omega)}
\le
C(1+t)^{-\left(\frac1q+\frac12+\frac{\sigma}{2}\right)}
\left(
\|u_0\|_{L^q(\Omega)}
+\|u_0\|_{H^{\sigma+1}(\mathcal{A})}
+\|u_1\|_{L^q(\Omega)}
+\|u_1\|_{H^\sigma(\mathcal{A})}
\right).
\end{equation} 
\end{proposition}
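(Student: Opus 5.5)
We would prove both estimates by treating the two pieces of
\[
v_{\rm lin}(t)=-\mathcal{A}D(t,\mathcal{A})u_0+\partial_tD(t,\mathcal{A})u_1
\]
separately, estimating each with the Matsumura-type bound \eqref{eq:Mats} of Lemma \ref{lem:Mats}, and then using the triangle inequality.

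\textbf{The $u_1$-term.} Apply \eqref{eq:Mats} with $k=1$.
\begin{itemize}
\item For \eqref{eq:vlin-L2}, take $s=0$. Then $[k+s-1]^+=0$ and
\[
\|\partial_tD(t,\mathcal{A})u_1\|_{L^2(\Omega)}\le C(1+t)^{-(\frac1q-\frac12)-1}\big(\|u_1\|_{L^q(\Omega)}+e^{-t/4}\|u_1\|_{L^2(\Omega)}\big).
\]
The exponent is $-(\frac1q+\frac12)$, as required.
\item For \eqref{eq:vlin-high}, take $s=\sigma$. Then $[k+s-1]^+=\sigma$, and $\mathcal{A}^{\sigma/2}$ commutes with $\partial_tD(t,\mathcal{A})$ by the functional calculus. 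This gives the decay $(1+t)^{-(\frac1q+\frac12+\frac\sigma2)}$ with the norm $\|u_1\|_{L^q(\Omega)}+\|u_1\|_{H^\sigma(\mathcal{A})}$.
\end{itemize}

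\textbf{The $u_0$-term.} Since $\mathcal{A}D(t,\mathcal{A})=\mathcal{A}^{2/2}D(t,\mathcal{A})$, we apply \eqref{eq:Mats} with $k=0$.
\begin{itemize}
\item For \eqref{eq:vlin-L2}, take $s=2$. This gives the rate $(1+t)^{-(\frac1q-\frac12)-1}$ and the regularity index $[0+2-1]^+=1$, that is, the norm $\|u_0\|_{L^q(\Omega)}+e^{-t/4}\|u_0\|_{H^1(\mathcal{A})}$, exactly as claimed.
\item For \eqref{eq:vlin-high}, take $s=2+\sigma$, i.e. estimate $\mathcal{A}^{(2+\sigma)/2}D(t,\mathcal{A})u_0$. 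This gives decay $(1+t)^{-(\frac1q-\frac12)-1-\frac\sigma2}$ and the index $[\sigma+1]^+=\sigma+1$, matching the assumption $u_0\in H^{\sigma+1}(\mathcal{A})$.
\end{itemize}
In every case the factor $e^{-t/4}$ is bounded by $1$, so summing the two pieces yields \eqref{eq:vlin-L2} and \eqref{eq:vlin-high}.

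\textbf{Main obstacle.} The only delicate point is to check that Lemma \ref{lem:Mats} applies with $s>0$ arbitrary, in particular $s=2$ and $s=2+\sigma$, for which the lemma requires $u_0\in L^q(\Omega)\cap H^{[s-1]^+}(\mathcal{A})$. The lemma is stated for all $s\ge0$, and the data assumptions supply exactly the required regularity, so the index bookkeeping closes.

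If one preferred not to lean on large $s$, a fallback is to factor
\[
\mathcal{A}^{(2+\sigma)/2}D(t,\mathcal{A})=\mathcal{A}^{(2+\sigma)/2}D(t/2,\mathcal{A})\cdot(\text{propagator at time } t/2)
\]
via the semigroup property of the associated first-order system. This, however, mixes $D$ and $\partial_tD$, so the direct application of the lemma is the intended route. For $0<t\le1$ the estimates reduce to boundedness of the operators on the stated spaces, which is already covered by \eqref{eq:Mats} because it holds for all $t>0$.
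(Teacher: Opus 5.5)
Your proposal is correct and matches the paper's proof: it also applies \eqref{eq:Mats} with $(k,s)=(1,0)$ and $(0,2)$ for \eqref{eq:vlin-L2}, and with $(k,s)=(1,\sigma)$ and $(0,\sigma+2)$ for \eqref{eq:vlin-high}, and then sums the two pieces using $e^{-t/4}\le 1$. Your index bookkeeping for the required regularity $H^{[k+s-1]^+}(\mathcal{A})$ agrees with the paper's, so the fallback factorization you mention is not needed.
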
 

\begin{proof}
First, applying \eqref{eq:Mats} in Lemma~\ref{lem:Mats} with $(k,s)=(1,0)$ and $(k,s)=(0,2)$ we get
\[
\big\|\partial_tD(t,\mathcal{A})u_1\big\|_{L^2(\Omega)}
\le
C(1+t)^{-\left(\frac1q+\frac12\right)}
\left(
\|u_1\|_{L^q(\Omega)}
+
e^{-t/4}\|u_1\|_{L^2(\Omega)}
\right)
\]
and
\[
\big\|\mathcal{A}D(t,\mathcal{A})u_0\big\|_{L^2(\Omega)}
\le
C(1+t)^{-\left(\frac1q+\frac12\right)}
\left(
\|u_0\|_{L^q(\Omega)}
+
e^{-t/4}\|u_0\|_{H^1(\mathcal{A})}
\right),
\]
respectively. Thus, combining these two inequalities gives the estimate \eqref{eq:vlin-L2}. For the higher-order estimate, we first apply \eqref{eq:Mats} in Lemma~\ref{lem:Mats} with $(k,s)=(1,\sigma)$ to deduce
\[
\big\|\mathcal{A}^{\sigma/2}\partial_tD(t,\mathcal{A})u_1\big\|_{L^2(\Omega)}
\le
C(1+t)^{-\left(\frac1q+\frac12+\frac{\sigma}{2}\right)}
\left(
\|u_1\|_{L^q(\Omega)}
+
e^{-t/4}\|u_1\|_{H^\sigma(\mathcal{A})}
\right).
\]
Again, using \eqref{eq:Mats} in Lemma~\ref{lem:Mats} with $(k,s)=(0,\sigma+2)$ we obtain
\[
\big\|\mathcal{A}^{(\sigma+2)/2}D(t,\mathcal{A})u_0\big\|_{L^2}
\le
C(1+t)^{-\left(\frac1q+\frac12+\frac{\sigma}{2}\right)}
\left(
\|u_0\|_{L^q(\Omega)}
+
e^{-t/4}\|u_0\|_{H^{\sigma+1}(\mathcal{A})}
\right).
\]
In this way, we combine the last two estimates to get the estimate \eqref{eq:vlin-high}. Hence, our proof is complete.
\end{proof}

\section{Proof of Theorem \ref{Main.Theorem}}\label{Sec.Proof}

\subsection{Setting of the change of variables}\label{Sec.Initial}

In this subsection, we reformulate the original problem (\ref{eq:main}) into an integral equation which should be suitable for applying the fixed-point argument. Since the nonlinearity of (\ref{eq:main}) depends only on the time derivative, it is natural to introduce the change of variables $v:=u_t$. The idea is based on constructing \(v\) first and then we will recover \(u\) by taking integration. Namely, thanks to Duhamel's principle, we express the solutions to to (\ref{eq:main}) by
\begin{equation}\label{eq:mild-u}
u(t)
=
\partial_tD(t,\mathcal{A})u_0
+
D(t,\mathcal{A})(u_0+u_1)
+
\int_0^t
D(t-s,\mathcal{A})\mathcal{N}[u_t](s)\,ds.
\end{equation}
Due to the fact $\partial_t^2 D+\partial_t D+\mathcal{A}D=0$, it entails
$\partial_t^2D=-\partial_tD-\mathcal{A}D$. Differentiating both sides of \eqref{eq:mild-u} with respect to $t$ we achieve
\begin{align*}
u_t(t)
&=
\partial_t^2D(t,\mathcal{A})u_0
+
\partial_tD(t,\mathcal{A})(u_0+u_1)
+
\int_0^t
\partial_tD(t-s,\mathcal{A})
\mathcal{N}[u_t](s)ds
\\
&=
-\mathcal{A}D(t,\mathcal{A})u_0
+
\partial_tD(t,\mathcal{A})u_1
+
\int_0^t
\partial_tD(t-s,\mathcal{A})
\mathcal{N}[u_t](s)ds,
\end{align*}
that is,
\begin{equation}\label{eq:mild-v}
v(t)
=
-\mathcal{A}D(t,\mathcal{A})u_0
+
\partial_tD(t,\mathcal{A})u_1
+
\int_0^t
\partial_tD(t-s,\mathcal{A})
\mathcal{N}[v](s)\,ds.
\end{equation}
In this way, we may recover the corresponding solution \(u\) to \eqref{eq:main} by the relation
\begin{equation}\label{eq:recover}
u(t)
:=
u_0
+
\int_0^t
v(\tau)\,d\tau.
\end{equation}
For this reason, let gives the following definition of mild solution to \eqref{eq:main}.

\begin{definition}[Mild solution]
Let \(T>0\). Then, a function
$$u\in \mathcal{C}^1\Big([0,T);D(\mathcal{A}^{\sigma/2})\Big)$$
is called a mild local solution to \eqref{eq:main} if $u$ can be written by (\ref{eq:recover}) and
$$ v\in \mathcal{C}\Big([0,T);D(\mathcal{A}^{\sigma/2})\Big)$$
satisfies \eqref{eq:mild-v}. If we can take $T$ as $\infty$, then $u$ is called a mild global solution to (\ref{eq:main}).
\end{definition}

\begin{remark}[Equivalence of formulations]
\label{lem:equivalence}
Let \(T>0\). If
$$ v\in \mathcal{C}\Big([0,T);D(\mathcal{A}^{\sigma/2})\Big)$$
satisfies
\eqref{eq:mild-v},
then the function
$$u\in \mathcal{C}^1\Big([0,T);D(\mathcal{A}^{\sigma/2})\Big)$$
defined by
\eqref{eq:recover}
satisfies
\eqref{eq:mild-u}. Conversely, if \(u\) satisfies
\eqref{eq:mild-u}, then $v$ satisfies \eqref{eq:mild-v}. From these observation, it shows that solving \eqref{eq:mild-v} is equivalent to solving \eqref{eq:main}.
\end{remark}
To give the proof of Theorem \ref{Main.Theorem}, the following auxiliary estimate is also useful.
\begin{lemma} \label{lem:conv}
Let \(a>0\), \(b>1\), and \(0<\rho<\min\{a,b\}\). Then, it holds
\[
\int_0^t
\langle t-s\rangle^{-a}
\langle s\rangle^{-b}\,ds
\le
C(1+t)^{-\rho}.
\]
\end{lemma}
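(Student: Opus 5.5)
The plan is to split the integral at $s=t/2$ and bound each half by elementary means, after first reducing to $t\ge 1$. Throughout, $\langle s\rangle=(1+s^2)^{1/2}\sim 1+s$.

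\emph{Small times.} For $0\le t\le 1$ the integrand is bounded by $1$, so the integral is at most $1\le 2^{\rho}(1+t)^{-\rho}$. Hence it suffices to treat $t\ge1$.

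\emph{The piece $[0,t/2]$.} Here $t-s\ge t/2$, so $\langle t-s\rangle^{-a}\le C(1+t)^{-a}$. Because $b>1$, the remaining factor is integrable on $[0,\infty)$, and we get
\[
\int_0^{t/2}\langle t-s\rangle^{-a}\langle s\rangle^{-b}\,ds\le C(1+t)^{-a}\int_0^\infty\langle s\rangle^{-b}\,ds\le C(1+t)^{-a}\le C(1+t)^{-\rho},
\]
using $\rho<a$.

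\emph{The piece $[t/2,t]$.} Here $\langle s\rangle^{-b}\le C(1+t)^{-b}$. The remaining integral $\int_0^{t/2}\langle\tau\rangle^{-a}\,d\tau$ depends on the size of $a$:
\begin{itemize}
\item if $a>1$, it is bounded by a constant;
\item if $a=1$, it is bounded by $C\log(2+t)$;
\item if $a<1$, it is bounded by $C(1+t)^{1-a}$.
\end{itemize}
This piece is therefore bounded by $C(1+t)^{-b}$, $C(1+t)^{-b}\log(2+t)$, or $C(1+t)^{-b+1-a}$ respectively.

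\emph{Absorbing into $(1+t)^{-\rho}$.} The first case is at most $C(1+t)^{-\rho}$ since $\rho<b$. The second is at most $C(1+t)^{-\rho}$ because $\rho<b$ and the gap $b-\rho>0$ absorbs the logarithm. In the third case $b>1$ gives $b-1+a>a>\rho$, so it is again at most $C(1+t)^{-\rho}$. The only point requiring care is this bookkeeping of the borderline case $a=1$; the strict inequality $\rho<\min\{a,b\}$ provides exactly the room needed to swallow the logarithmic loss. Adding the two pieces gives the claim with a constant $C$ depending only on $a,b,\rho$.
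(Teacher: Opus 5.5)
Your proof is correct and complete. The reduction to $t\ge1$ is fine. On $[0,t/2]$ you bound $\langle t-s\rangle^{-a}\le C(1+t)^{-a}$ and use that $\langle s\rangle^{-b}$ is integrable. On $[t/2,t]$ the three-case analysis in $a$ is also right. The paper states this lemma without proof and treats it as a standard auxiliary estimate. Your splitting at $s=t/2$ is exactly the standard argument one would supply.

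One remark needs correcting, and the correction works in your favour: your argument never uses the strictness of $\rho<\min\{a,b\}$. Each piece is already bounded by $C(1+t)^{-\min\{a,b\}}$:
\begin{itemize}
\item the first piece by $(1+t)^{-a}$;
\item the second piece by $(1+t)^{-b}$, by $(1+t)^{-b}\log(2+t)$, or by $(1+t)^{-(a+b-1)}$, where $(1+t)^{-(a+b-1)}\le(1+t)^{-a}$ since $b>1$.
\end{itemize}
In the borderline case $a=1$ you have $\min\{a,b\}=1<b$. The logarithm is therefore absorbed by the gap $b-1>0$, which comes from the hypothesis $b>1$, not from $\rho<\min\{a,b\}$. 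Your closing remark attributes this to the wrong inequality.

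So you have in fact proved the endpoint version: $\rho=\min\{a,b\}$ is allowed whenever $b>1$. This matters for the paper. In the estimate of $I_1$, the lemma is applied with $a=\rho=\gamma$ and $b=\beta>\gamma$. The lemma as stated does not cover that choice, since it requires $\rho<\gamma$, but your argument does.
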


\subsection{Proof of the global existence part}\label{Proof.Sec}
The proof is based on a contraction argument in a time-weighted function space adapted to the damped evolution.
A key point is to establish nonlinear estimates which is compatible with fractional powers of the Dirichlet Laplacian and the decay structure of the associated semigroup. At first, let us denote the following quantities:
\[
\gamma:=\frac1{\gamma(p)}+\frac12 \quad\text{and}\quad
\beta:=
\begin{cases}
p\gamma &\text{ if } 1<p<2,\\[1mm]
\dfrac{3p}{2} &\text{ if } p\ge2.
\end{cases}
\]
Due to the fact that \(\beta>\gamma\) for every \(p>1\), we choose \(\eta\) such that
\[
\gamma<\eta<\min\left\{\gamma+\frac{\sigma}{2},\beta\right\}
\]
with $1<\sigma<\min\{2,p\}$. For $T>0$, we introduce the evolution space
\[
X(T):= \mathcal{C}\Big([0,T);D(\mathcal{A}^{\sigma/2})\Big),
\]
endowed with the norm
\[
\|v\|_{X(T)}
:=
\sup_{0\le t<T}
\Big(
(1+t)^\gamma\|v(t)\|_{L^2(\Omega)}
+
(1+t)^\eta\|\mathcal{A}^{\sigma/2}v(t)\|_{L^2(\Omega)}
\Big).
\]
We define the following operator $\Phi$ on the space $X(T)$:
\begin{align*}
   \Phi[v](t,x) &= -\mathcal{A}D(t,\mathcal{A})u_0 + \partial_tD(t,\mathcal{A})u_1 + \int_0^t \partial_tD(t- s,\mathcal{A}) \mathcal{N}[v](s)\,ds \\
    &= : v_{\rm lin}(t)+v_{\rm nlin}(t).
\end{align*}
Our main purpose is to verify a pair of the following inequalities for all $v, w \in X(T)$:
    \begin{align}
        \big\|\Phi[v]\big\|_{X(T)} &\leq C_1 \|(u_0, u_1)\|_{\rm Data} + C_2\|v\|_{X(T)}^p,  \label{eq:self-final} \\
        \big\|\Phi[v]-\Phi[w]\big\|_{X(T)} &\leq C_3 \|v-w\|_{X(T)}\left(\|v\|_{X(T)}^{p-1}+\|w\|_{X(T)}^{p-1}\right) \label{eq:contract-final}.
    \end{align}
Then $\Phi$ is a contraction mapping to obtain a unique solution $v^* = \Phi[v^*] \in X(T)$ by the Banach fixed point theorem. Since $T$ is arbitrary, in this way we conclude that $v^* \in X(\infty)$. Namely, the proof can be divided into some steps as follows: \medskip

\noindent\textbf{\underline{Step 1:}} Let us show the inequality \eqref{eq:self-final} at first. By the linear estimates from Proposition \ref{lem:vlin}, we arrive at
\[
\|v_{\rm lin}\|_{X(T)}
\le
C_1 \|(u_0, u_1)\|_{\rm Data}
\]
For this reason, it is sufficient to prove the following estimate in place of \eqref{eq:self-final}:
\begin{equation} \label{eq:self-final-0}
    \|v_{\rm nlin}\|_{X(T)} \leq C_2\|v\|_{X(T)}^p.
\end{equation}
Indeed, setting $M:=\|v\|_{X(T)}$ we derive
\begin{align}
\|v(t)\|_{L^2(\Omega)}
&\le M(1+t)^{-\gamma},
\label{eq:self-v-L2}\\
\big\|A^{\sigma/2}v(t)\big\|_{L^2(\Omega)}
&\le M(1+t)^{-\eta}
\label{eq:self-v-high}
\end{align}
by the definition of \(X(T)\). Using the interpolation argument and \eqref{eq:self-v-L2}-\eqref{eq:self-v-high}, one finds
\begin{equation}\label{eq:self-v-inf}
\|v(t)\|_{L^\infty(\Omega)}
\le
C
\|v(t)\|_{L^2(\Omega)}^{1-\frac1\sigma}
\big\|\mathcal{A}^{\sigma/2}v(t)\big\|_{L^2(\Omega)}^{\frac1\sigma} \le
CM(1+t)^{-\delta},
\end{equation}
where
$$
\delta
:=
\gamma\left(1-\frac1\sigma\right)+\frac{\eta}{\sigma}.
$$
Since \(\eta>\gamma\), it implies $\delta>\gamma$. To estimate \(\mathcal{N}[v]\) in \(L^{\gamma(p)}(\Omega)\), let us divide our consideration into two cases as follows:
\begin{itemize}
    \item If \(1<p<2\), then \(\gamma(p)=2/p\) and one has
\[
\big\|\mathcal{N}[v](t)\big\|_{L^{\gamma(p)}(\Omega)}
\le C\big\||v(t)|^p\big\|_{L^{2/p}(\Omega)}
=
C\|v(t)\|_{L^2(\Omega)}^p
\le
M^p(1+t)^{-p\gamma}= M^p(1+t)^{-\beta}.
\]
    \item If \(p\ge2\), then \(\gamma(p)=1\), \(\gamma=3/2\) and we get
\begin{align*}
\big\|\mathcal{N}[v](t)\big\|_{L^1(\Omega)} &\le \|v(t)\|_{L^\infty(\Omega)}^{p-2} \|v(t)\|_{L^2(\Omega)}^2 \le CM^p (1+t)^{-(p-2)\delta- 2\gamma}.
\end{align*}
Since $\delta >\gamma$, it hold $(p-2)\delta +2\gamma
> 3p/2= \beta$ which means that 
\[
\big\|\mathcal{N}[v](t)\big\|_{L^1(\Omega)} \le CM^p(1+t)^{-\beta}.
\]
\end{itemize}
Therefore, both these cases lead to
\begin{equation}\label{eq:self-F-q}
\big\|\mathcal{N}[v](t)\big\|_{L^{\gamma(p)}(\Omega)}
\le
CM^p(1+t)^{-\beta}.
\end{equation} 
Next, applying the estimate \eqref{eq:chain} of the fractional chain rule from Proposition \ref{prop:chain} we achieve
\begin{equation}\label{eq:self-F-high}
\big\|\mathcal{A}^{\sigma/2}\mathcal{N}[v](t)\big\|_{L^2(\Omega)}
\le
C
\|v(t)\|_{L^\infty(\Omega)}^{p-1}
\big\|\mathcal{A}^{\sigma/2}v(t)\big\|_{L^2(\Omega)}
\le
CM^p
(1+t)^{-(p-1)\delta-\eta}.
\end{equation}
by \eqref{eq:self-v-high} and \eqref{eq:self-v-inf}. Now, the application of \eqref{eq:Mats} in Lemma~\ref{lem:Mats} with $k=1$, $s=0$, $q=\gamma(p)$ gives
\begin{align*}
\|v_{\rm nlin}(t)\|_{L^2(\Omega)}
&\le
C
\int_0^t
(1+t-s)^{-\gamma}
\big\|\mathcal{N}[v](s)\big\|_{L^{\gamma(p)}(\Omega)}\,ds + C\int_0^t e^{-\frac{t-s}{4}} \big\|\mathcal{N}[v](s)\big\|_{L^2(\Omega)}\,ds \nonumber \\
&\quad =: I_1(t) + I_2(t).
\end{align*}
By \eqref{eq:self-F-q}, the first integral is controlled as follows:
\[
I_1(t)
\le
CM^p
\int_0^t
(1+t-s)^{-\gamma}
(1+s)^{-\beta}\,ds
\le
CM^p(1+t)^{-\gamma},
\]
where we have utilized Lemma \ref{lem:conv} with $\gamma>0$, $\beta>1$ and $\gamma<\beta$. For the second integral, we use
\[
\big\|\mathcal{N}[v](t)\big\|_{L^2(\Omega)}
\le
\|v(t)\|_{L^\infty}^{p-1}\|v(t)\|_{L^2(\Omega)}
\le
CM^p
(1+t)^{-(p-1)\delta-\gamma}.
\]
and the integrability of exponential kernel to conclude
\[
I_2(t)
\le
CM^p(1+t)^{-\gamma},
\]
by noticing that $(p-1)\delta+\gamma>\gamma$. Therefore, combining the two previous estimates one arrives at
\begin{equation}\label{eq:self-N-L2}
\|v_{\rm nlin}(t)\|_{L^2(\Omega)}
\le
CM^p(1+t)^{-\gamma}.
\end{equation}
Again, the application of \eqref{eq:Mats} in Lemma~\ref{lem:Mats} with $k=1$, $s=\sigma$ and $q=\gamma(p)$ leads to
\begin{align*}
\big\|\mathcal{A}^{\sigma/2}v_{\rm nlin}(t)\big\|_{L^2(\Omega)}
&\le
C
\int_0^t
(1+t-s)^{-\gamma-\frac{\sigma}{2}}
\big\|\mathcal{N}[v](s)\big\|_{L^{\gamma(p)}(\Omega)}\,ds
+
C
\int_0^t
e^{-\frac{t-s}{4}}
\big\|\mathcal{N}[v](s)\big\|_{H^\sigma(\mathcal{A})}\,ds \notag \\
&\quad =: I_3(t)+ I_4(t).
\end{align*}
For the first integral, we use \eqref{eq:self-F-q} to proceed with
\[
I_3(t)
\le
CM^p
\int_0^t
(1+t-s)^{-\gamma-\frac{\sigma}{2}}
(1+s)^{-\beta}\,ds
\le
CM^p(1+t)^{-\eta}
\]
by applying Lemma \ref{lem:conv} with the condition
\[
\eta<\min\left\{\gamma+\frac{\sigma}{2},\beta\right\},
\]
By \eqref{eq:self-F-high}, we handle the second integral in the following way:
\[
I_4(t)
\le
CM^p
\int_0^t
e^{-\frac{t-s}{4}}
(1+s)^{-(p-1)\delta-\eta}\,ds
\le
CM^p(1+t)^{-\eta},
\]
where we note that $(p-1)\delta+\eta>\eta$. In this way, we may conclude
\begin{equation}\label{eq:self-N-high}
\big\|\mathcal{A}^{\sigma/2}v_{\rm nlin}(t)\big\|_{L^2(\Omega)}
\le
CM^p(1+t)^{-\eta}.
\end{equation}
The combination of \eqref{eq:self-N-L2} and \eqref{eq:self-N-high} gives \eqref{eq:self-final-0} from the definition of the norm in $X(T)$. \medskip

\noindent\textbf{\underline{Step 2:}} Let us turn to estimate \eqref{eq:contract-final}. Denoting
\[
M_v:=\|v\|_{X(T)},
\qquad
M_w:=\|w\|_{X(T)}
\quad \text{ and }\quad
M:=M_v+M_w.
\]
From \eqref{eq:N-Lip}, it follows the pointwise elementary inequality 
\begin{equation}\label{eq:pointwise-diff}
\big|\mathcal{N}[v]-\mathcal{N}[w]\big|
\le
C
\left(
|v|^{p-1}+|w|^{p-1}
\right)
|v-w|
\end{equation}
to get
\[
\|v(t)-w(t)\|_{L^2(\Omega)}
\le
(1+t)^{-\gamma}\|v-w\|_{X(T)},
\]
and
\[
\big\|\mathcal{A}^{\sigma/2}\big(v(t)-w(t)\big)\big\|_{L^2(\Omega)}
\le
(1+t)^{-\eta}\|v-w\|_{X(T)}.
\]
Also, an interpolation argument leads to
\[
\|v(t)\|_{L^\infty(\Omega)}
+
\|w(t)\|_{L^\infty(\Omega)}
\le
CM
(1+t)^{-\delta}.
\]
To control the norm of $\mathcal{N}[v]-\mathcal{N}[w]$ in \(L^{\gamma(p)}(\Omega)\), let us consider the following two cases:
\begin{itemize}
    \item If \(1<p<2\), then \(\gamma(p)=2/p\). Using \eqref{eq:pointwise-diff} and H\"{o}lder inequality one has
\[
\big\|\mathcal{N}[v](t)-\mathcal{N}[w](t)\big\|_{L^{\gamma(p)}(\Omega)}
\le
C
\left(
\|v\|_{L^2(\Omega)}^{p-1}
+
\|w\|_{L^2(\Omega)}^{p-1}
\right)
\|v-w\|_{L^2(\Omega)}.
\]
Therefore, it holds
\[
\big\|\mathcal{N}[v](t)-\mathcal{N}[w](t)\big\|_{L^{\gamma(p)}(\Omega)}
\le
CM^{p-1} (1+t)^{-p\gamma}\|v-w\|_{X(T)} = CM^{p-1}(1+t)^{-\beta}\|v-w\|_{X(T)}.
\]
    \item If \(p\ge2\), then \(\gamma(p)=1\). By \eqref{eq:pointwise-diff}, we achieve
\begin{align*}
\big\|\mathcal{N}[v](t)-\mathcal{N}[w](t)\big\|_{L^1(\Omega)}
&\le
C\left(\|v\|_{L^\infty(\Omega)}^{p-2}+\|w\|_{L^\infty(\Omega)}^{p-2}\right) \left(\|v\|_{L^2(\Omega)}+\|w\|_{L^2(\Omega)}\right)\|v-w\|_{L^2(\Omega)} \\
&\le
CM^{p-1}(1+t)^{-\frac{3p}{2}}\|v-w\|_{X(T)}= CM^{p-1}(1+t)^{-\beta}\|v-w\|_{X(T)}.
\end{align*}
\end{itemize}
Thus, both these cases gives immediately
\begin{equation}\label{eq:diff-F-q}
\big\|\mathcal{N}[v](t)-\mathcal{N}[w](t)\big\|_{L^{\gamma(p)}(\Omega)}
\le CM^{p-1}(1+t)^{-\beta}\|v-w\|_{X(T)}.
\end{equation}
Next, the application of the estimate \eqref{eq:Lip-chain-first} of the fractional chain rule from Proposition \ref{prop:chain} leads to 
\begin{align*}
&\big\|\mathcal{A}^{\sigma/2}\big(\mathcal{N}[v](t)-\mathcal{N}[w](t)\big)\big\|_{L^2(\Omega)} \notag \\
&\quad\le
C
\left(
\|v(t)\|_{L^\infty(\Omega)}^{p-1}
+
\|w(t)\|_{L^\infty(\Omega)}^{p-1}
\right)\big\|\mathcal{A}^{\sigma/2}\big(v(t)-w(t)\big)\big\|_{L^2(\Omega)}
\notag\\
&\qquad+
C
\left(
\|v(t)\|_{L^\infty(\Omega)}^{p-2}
+
\|w(t)\|_{L^\infty(\Omega)}^{p-2}
\right)\big\|\mathcal{A}^{\sigma/2}w(t)\big\|_{L^2(\Omega)}
\|v(t)-w(t)\|_{L^\infty(\Omega)}.
\end{align*}
Using the same interpolation estimate for \(v-w\), we obtain
\begin{equation}\label{eq:diff-F-high}
\big\|\mathcal{A}^{\sigma/2}\big(\mathcal{N}[v](t)-\mathcal{N}[w](t)\big)\big\|_{L^2(\Omega)}
\le
CM^{p-1}(1+t)^{-(p-1)\delta-\eta}\|v-w\|_{X(T)}.
\end{equation}
It is clear to see that
\[
\Phi[v](t)-\Phi[w](t)
=
\int_0^t
\partial_tD(t-s,\mathcal{A})
\big(
\mathcal{N}[v](s)-\mathcal{N}[w](s)
\big)
\,ds.
\]
The \(L^2\)-norm part is estimated by Lemma~\ref{lem:Mats} and the same way as in \eqref{eq:diff-F-q}. Hence, we have
\[
\big\|\Phi[v](t)-\Phi[w](t)\big\|_{L^2(\Omega)}
\le
CM^{p-1}
(1+t)^{-\gamma}\|v-w\|_{X(T)}.
\]
Similarly, the high-order part is controlled by Lemma~\ref{lem:Mats}, \eqref{eq:diff-F-q} and \eqref{eq:diff-F-high}. Therefore, one finds
\[
\big\|\mathcal{A}^{\sigma/2}\big(\Phi[v](t)-\Phi[w](t)\big)\big\|_{L^2(\Omega)}
\le
CM^{p-1}(1+t)^{-\eta}
\|v-w\|_{X(T)}.
\]
Thus, combining both the parts we obtain
\[
\big\|\Phi[v]-\Phi[w]\big\|_{X(T)}
\le
CM^{p-1}
\|v-w\|_{X(T)}.
\]
Since
\[
M^{p-1}
\le
C
\left(
\|v\|_{X(T)}^{p-1}
+
\|w\|_{X(T)}^{p-1}
\right),
\]
we may conclude \eqref{eq:contract-final}. \medskip

\noindent\textbf{\underline{Step 3:}} To complete the proof of Theorem \ref{Main.Theorem}, we recall from \eqref{eq:mild-v} the following relation:
$$
v(t)
=
-\mathcal{A}D(t,\mathcal{A})u_0
+
\partial_tD(t,\mathcal{A})u_1
+
\int_0^t
\partial_tD(t-s,\mathcal{A})\mathcal{N}[v](s)\,ds.
$$
Since \(D(\mathcal{A}^{\sigma/2})\) is a Hilbert space, \(X(T)\) is a Banach space. We solve the above integral equation in the Banach space
\(X(\infty)\). Indeed, we get
\begin{align*}
    \big\|\Phi[v]\big\|_{X(\infty)} &\le
C\varepsilon
+
C\|v\|_{X(\infty)}^p, \\
\big\|\Phi[v]-\Phi[w]\big\|_{X(\infty)} &\le C \left(
\|v\|_{X(\infty)}^{p-1} + \|w\|_{X(\infty)}^{p-1} \right) \|v-w\|_{X(\infty)}.
\end{align*}
where
\[
\varepsilon
= \|(u_0, u_1)\|_{\rm Data}:= 
\|u_0\|_{L^{\gamma(p)}(\Omega)}
+
\|u_0\|_{H^{\sigma+1}(\mathcal{A})}
+
\|u_1\|_{L^{\gamma(p)}(\Omega)}
+
\|u_1\|_{H^\sigma(\mathcal{A})}.
\]
Let us now choose $M:=2C\varepsilon$ and consider
\[
\mathcal{B}_M
:=
\{
v\in X(\infty):
\|v\|_{X(\infty)}\le M
\}.
\]
Assume that
\(\varepsilon>0\)
is sufficiently small such that $CM^{p-1}\le 1/4$ and $CM^p\le M/2$. Then for every \(v\in \mathcal{B}_M\), we derive
\begin{align*}
\big\|\Phi[v]\big\|_{X(\infty)}
&
\le
C\varepsilon+CM^p \le
M/2+M/2= M,
\end{align*}
which implies $\Phi(\mathcal{B}_M)\subset \mathcal{B}_M$. Furthermore, if
\(v,w\in \mathcal{B}_M\),
then \eqref{eq:contract-final} gives

\begin{align*}
\big\|\Phi[v]-\Phi[w]\big\|_{X(\infty)}
&
\le
2CM^{p-1}
\|v-w\|_{X(\infty)} \le
\frac12
\|v-w\|_{X(\infty)}.
\end{align*}
Hence, one claims that \(\Phi\) is a contraction on
\(\mathcal{B}_M\). Since \(X(\infty)\) is a Banach space, we apply the Banach's fixed-point theorem to conclude a unique function $v^*\in X(\infty)$ such that $v^*=\Phi(v^*)$. Finally, let us now reconstruct the original solution \(u\) by the relation \eqref{eq:recover}. Since
\[
v\in X(\infty)
\subset
\mathcal{C}\Big([0,\infty);D(\mathcal{A})^{\sigma/2}\Big),
\]
the integral in \eqref{eq:recover} is well-defined and
\[
u\in
\mathcal{C}^1\Big([0,\infty);D(\mathcal{A}^{\sigma/2})\Big)
.
\]
After substituting \(u_t=v\) into \eqref{eq:mild-v} and then integrating in time, we recover \eqref{eq:mild-u} by the relation $\partial_t^2 D+\partial_t D+\mathcal{A}D=0$. Therefore \(u\) is a global mild solution of \eqref{eq:main}. Moreover, the uniqueness follows immediately from the uniqueness of the fixed point in \(X(\infty)\). This completes our proof.

\subsection{Proof of the asymptotic profile part} \label{sec:diffusion-profile}
In this section, we identify the leading-order asymptotic profile of the time derivative \(u_t\) of the global achieved solutions to \eqref{eq:main}. To do this, let us use again the same notations as in Sections \ref{Sec.Initial} and \ref{Proof.Sec}. Moreover, recalling the nonlinear estimates established in Section \ref{Proof.Sec} we arrive at the following auxiliary ones:
\begin{align}
\label{eq:F-Lq-decay}
\big\|\mathcal{N}[v](t)\big\|_{L^{\gamma(p)}(\Omega)}
&\le
C\varepsilon^p(1+t)^{-\beta},\\
\label{eq:F-L2-decay}
\big\|\mathcal{N}[v](t)\big\|_{L^2(\Omega)}
&\le
C\varepsilon^p(1+t)^{-(p-1)\delta-\gamma},\\
\label{eq:F-Hsigma-decay}
\big\|\mathcal{A}^{\sigma/2}\mathcal{N}[v](t)\big\|_{L^2(\Omega)}
&\le
C\varepsilon^p(1+t)^{-(p-1)\delta-\eta}.
\end{align}
Since $\beta>1$, $(p-1)\delta+\gamma>1$ and $(p-1)\delta+\eta>1$, it follows that
\[
\mathcal{N}[v]
\in
L^1\Big((0,\infty);L^{\gamma(p)}(\Omega)\Big)
\cap
L^1\Big((0,\infty);H^\sigma(\mathcal{A})\Big).
\]
Therefore, the Bochner integral
\begin{equation*}
\label{eq:nonlinear-asymptotic-data}
M_\infty
:=
\int_0^\infty \mathcal{N}[v](s)\,ds
\end{equation*}
is well defined in $L^{\gamma(p)}(\Omega)\cap H^\sigma(\mathcal{A})$. To get started, let us now decompose
\begin{equation}
\label{eq:ut-profile-decomposition}
u_t(t)+\mathcal{A}e^{-t\mathcal{A}}(u_0+u_1+M_\infty)
=
R_{\mathrm{lin}}(t)
+
R_{\mathrm{dw}}(t)
-
R_{\mathrm{shift}}(t)
+
R_{\mathrm{tail}}(t),
\end{equation}
where
\begin{align*}
R_{\mathrm{lin}}(t)
&:=
-\mathcal{A}\bigl(D(t,\mathcal{A})-e^{-t\mathcal{A}}\bigr)u_0
+
\partial_t\bigl(D(t,\mathcal{A})-e^{-t\mathcal{A}}\bigr)u_1,\\
R_{\mathrm{dw}}(t)
&:=
\int_0^t
\left(
\partial_tD(t-s,\mathcal{A})
+
\mathcal{A}e^{-(t-s)\mathcal{A}}
\right)\mathcal{N}[v](s)\,ds,\\
R_{\mathrm{shift}}(t)
&:=
\int_0^t
\mathcal{A}\left(
e^{-(t-s)\mathcal{A}}-e^{-t\mathcal{A}}
\right)\mathcal{N}[v](s)\,ds,\\
R_{\mathrm{tail}}(t)
&:=
\mathcal{A}e^{-t\mathcal{A}}\int_t^\infty \mathcal{N}[v](s)\,ds.
\end{align*}
We are going to divide our consideration into some steps as follows:\medskip

\noindent\textbf{$\bullet$ Estimate of the linear remainder $R_{\mathrm{lin}}(t)$:} After applying \eqref{eq:linear-diffusion-estimate-used} in Lemma~\ref{lem:Mats} with $(k,s)=(0,2)$ to \(u_0\), and with $(k,s)=(1,0)$
to \(u_1\), we obtain
\begin{align}
\label{eq:Rlin-little-o}
\|R_{\mathrm{lin}}(t)\|_{L^2(\Omega)}
&\le
C(1+t)^{-\frac{1}{\gamma(p)}-\frac{3}{2}}
\left(
\|u_0\|_{L^{\gamma(p)}(\Omega)}
+
\|u_0\|_{H^1(\mathcal{A})}
+
\|u_1\|_{L^{\gamma(p)}(\Omega)}
+
\|u_1\|_{L^2(\Omega)}
\right) = o(t^{-\gamma})
\end{align}
since $1/\gamma(p)+1/2=\gamma$. \medskip

\noindent\textbf{$\bullet$ Estimate of the damped-wave/heat difference $R_{\mathrm{dw}}(t)$:} We separate it into the following three sub-terms:
\[
R_{\mathrm{dw}}(t)
=
R_{\mathrm{dw}}^{(1)}(t)
+
R_{\mathrm{dw}}^{(2)}(t)
+
R_{\mathrm{dw}}^{(3)}(t),
\]
where
\begin{align*}
R_{\mathrm{dw}}^{(1)}(t)
&:=
\int_0^{t/2}
\left[
\partial_tD(t-s,\mathcal{A})+\mathcal{A}e^{-(t-s)\mathcal{A}}
\right]\mathcal{N}[v](s)\,ds,\\
R_{\mathrm{dw}}^{(2)}(t)
&:=
\int_{t/2}^{t-1}
\left[
\partial_tD(t-s,\mathcal{A})+\mathcal{A}e^{-(t-s)\mathcal{A}}
\right]\mathcal{N}[v](s)\,ds,\\
R_{\mathrm{dw}}^{(3)}(t)
&:=
\int_{t-1}^{t}
\left[
\partial_tD(t-s,\mathcal{A})+\mathcal{A}e^{-(t-s)\mathcal{A}}
\right]\mathcal{N}[v](s)\,ds,
\end{align*}
for \(t\ge4\). For the first part, since \(t-s\ge t/2\), the linear diffusion
estimate with \((k,s)=(1,0)\) gives
\begin{align}
\big\|R_{\mathrm{dw}}^{(1)}(t)\big\|_{L^2(\Omega)}
&\le
C\int_0^{t/2}
(1+t-s)^{-\frac{1}{\gamma(p)}-\frac{3}{2}}
\left(
\big\|\mathcal{N}[v](s)\big\|_{L^{\gamma(p)}(\Omega)}
+
e^{-\frac{t-s}{4}}\big\|\mathcal{N}[v](s)\big\|_{L^2(\Omega)}
\right)ds \notag\\
&\le
Ct^{-\frac{1}{\gamma(p)}-\frac{3}{2}}
\int_0^\infty\big\|\mathcal{N}[v](s)\big\|_{L^{\gamma(p)}(\Omega)}\,ds
+
Ce^{-ct}
\int_0^{t/2}\big\|\mathcal{N}[v](s)\big\|_{L^2(\Omega)}\,ds \notag\\
&\le
C\varepsilon^p t^{-\frac{1}{\gamma(p)}-\frac{3}{2}},
\label{eq:Rdw1-estimate}
\end{align}
where $c$ is suitable positive constant. For the second part, using the relation \(t-s\ge1\) one finds
\begin{align*}
\big\|R_{\mathrm{dw}}^{(2)}(t)\big\|_{L^2(\Omega)}
&\le
C\int_{t/2}^{t-1}
(t-s)^{-\frac{1}{\gamma(p)}-\frac{3}{2}}\big\|\mathcal{N}[v](s)\big\|_{L^{\gamma(p)}(\Omega)}\,ds \notag\\
&\quad
+
C\int_{t/2}^{t-1}
e^{-\frac{t-s}{4}}\big\|\mathcal{N}[v](s)\big\|_{L^2(\Omega)}\,ds.
\end{align*}
Due to \(s\ge t/2\), these estimates
\eqref{eq:F-Lq-decay} and \eqref{eq:F-L2-decay} leads to
\begin{align}
\big\|R_{\mathrm{dw}}^{(2)}(t)\big\|_{L^2(\Omega)}
&\le
C\varepsilon^p t^{-\beta}
\int_1^{t/2}s^{-\frac{1}{\gamma(p)}-\frac{3}{2}}\,ds
+
C\varepsilon^p t^{-(p-1)\delta-\gamma}
\int_1^\infty e^{-\frac{s}{4}}\,ds \notag\\
&\le
C\varepsilon^p
\left(
t^{-\beta}+t^{-(p-1)\delta-\gamma}
\right). \label{eq:Rdw2-estimate}
\end{align}
Concerning the last part we use the following short-time estimates for $0\le s\le1$:
\begin{align*}
    \big\|\partial_tD(s,\mathcal{A})f\big\|_{L^2(\Omega)} \le C\|f\|_{L^2(\Omega)} \quad \text{ and }\quad 
    \big\|\mathcal{A}e^{-s \mathcal{A}}f\big\|_{L^2(\Omega)} \le Cs^{-1+\frac{\sigma}{2}}\big\|\mathcal{A}^{\sigma/2}f\big\|_{L^2(\Omega)},
\end{align*}
since the spectral theorem combined with
$$
\mathcal{A}e^{-s \mathcal{A}}
=
\mathcal{A}^{1-\sigma/2}e^{-s \mathcal{A}}\mathcal{A}^{\sigma/2}
\quad \text{ and }\quad
\sup_{\lambda\ge0}
\lambda^{-1+\frac{\sigma}{2}}e^{-s\lambda}
\le
Cs^{-1+\frac{\sigma}{2}}.
$$
Consequently, one realizes
\begin{align}
\big\|R_{\mathrm{dw}}^{(3)}(t)\big\|_{L^2(\Omega)}
&\le
C\int_{t-1}^{t}\big\|\mathcal{N}[v](s)\big\|_{L^2(\Omega)}\,ds+
C\int_{t-1}^{t}
(t-s)^{-1+\frac{\sigma}{2}}
\big\|\mathcal{A}^{\sigma/2}\mathcal{N}[v](s)\big\|_{L^2(\Omega)}\,ds \notag\\
&\le
C\varepsilon^p t^{-(p-1)\delta-\gamma}
+
C\varepsilon^p t^{-(p-1)\delta-\eta}
\int_0^1s^{-1+\frac{\sigma}{2}},ds \notag\\
&\le
C\varepsilon^p
\left(
t^{-(p-1)\delta-\gamma}
+
t^{-(p-1)\delta-\eta}
\right)
\label{eq:Rdw3-estimate}
\end{align}
by \eqref{eq:F-L2-decay}. For this reason, we link all estimates \eqref{eq:Rdw1-estimate},
\eqref{eq:Rdw2-estimate} and
\eqref{eq:Rdw3-estimate} to obtain
\begin{equation}
\label{eq:Rdw-little-o}
\|R_{\mathrm{dw}}(t)\|_{L^2(\Omega)}
=
o(t^{-\gamma}).
\end{equation}

\noindent\textbf{$\bullet$ Estimate of the time-shift remainder $R_{\mathrm{shift}}$:} Let us now split the term
\[
R_{\mathrm{shift}}(t)
=
R_{\mathrm{shift}}^{(1)}(t)
+
R_{\mathrm{shift}}^{(2)}(t)
+
R_{\mathrm{shift}}^{(3)}(t)
\]
over the intervals $[0,t/2]$, $[t/2,t-1]$, $[t-1,t]$, respectively. For \(0\le s\le t/2\), the semigroup identity
\[
e^{-(t-s)\mathcal{A}}-e^{-t\mathcal{A}}
=
\int_{t-s}^{t}\mathcal{A}e^{-\tau \mathcal{A}}\,d\tau
\]
gives
\[
\mathcal{A}\left(e^{-(t-s)\mathcal{A}}-e^{-t\mathcal{A}}\right)
=
\int_{t-s}^{t}\mathcal{A}^2e^{-\tau \mathcal{A}}\,d\tau.
\]
Using the heat semigroup estimate
\begin{equation*}
\label{eq:heat-semigroup-A2}
\big\|\mathcal{A}^2e^{-\tau \mathcal{A}}f\big\|_{L^2(\Omega)}
\le
C\tau^{-\frac{1}{\gamma(p)}-\frac{3}{2}}\|f\|_{L^{\gamma(p)}(\Omega)},
\end{equation*}
we derive
\begin{align*}
\big\|R_{\mathrm{shift}}^{(1)}(t)\big\|_{L^2(\Omega)}
&\le
C\int_0^{t/2}
\int_{t-s}^{t}
\tau^{-\frac{1}{\gamma(p)}-\frac{3}{2}}\|\mathcal{N}[v](s)\|_{L^{\gamma(p)}(\Omega)}\,d\tau\,ds \notag\\
&\le
Ct^{-\frac{1}{\gamma(p)}-\frac{3}{2}}
\int_0^{t/2}
s\|\mathcal{N}[v](s)\|_{L^{\gamma(p)}(\Omega)}\,ds \notag\\
&\le
C\varepsilon^p t^{-\frac{1}{\gamma(p)}-\frac{3}{2}}
\int_0^{t/2}
s(1+s)^{-\beta}\,ds
\end{align*}
by \eqref{eq:F-Lq-decay}. Due to the fact
\[
\int_0^{t/2}
s(1+s)^{-\beta}\,ds
\le
C
\begin{cases}
1 &\text{ if }\beta>2,\\
\log(2+t) &\text{ if }\beta=2,\\
t^{2-\beta} &\text{ if }1<\beta<2,
\end{cases}
\]
it follows that
\begin{equation}
\label{eq:Rshift1-little-o}
\big\|R_{\mathrm{shift}}^{(1)}(t)\big\|_{L^2(\Omega)}
=
o\Big(t^{-\frac{1}{\gamma(p)}-\frac{1}{2}}\Big)
=
o(t^{-\gamma}).
\end{equation}
For the middle-time part, thanks to the triangle inequality and the heat semigroup estimates, one has
\begin{align*}
\big\|R_{\mathrm{shift}}^{(2)}(t)\big\|_{L^2(\Omega)}
&\le
\int_{t/2}^{t-1}
\big\|\mathcal{A}e^{-(t-s)\mathcal{A}}\mathcal{N}[v](s)\big\|_{L^2(\Omega)}\,ds +
\int_{t/2}^{t-1}
\big\|\mathcal{A}e^{-t\mathcal{A}}\mathcal{N}[v](s)\big\|_{L^2(\Omega)}\,ds \notag\\
&\le
C\int_{t/2}^{t-1}
(t-s)^{-\frac{1}{\gamma(p)}-\frac{1}{2}}\|\mathcal{N}[v](s)\|_{L^{\gamma(p)}(\Omega)}\,ds \notag\\
&\quad+
Ct^{-\frac{1}{\gamma(p)}-\frac{1}{2}}
\int_{t/2}^{t-1}\|\mathcal{N}[v](s)\|_{L^{\gamma(p)}(\Omega)}\,ds
\end{align*}
by \eqref{eq:F-Lq-decay}. Therefore, we may conclude that
\begin{align}
\big\|R_{\mathrm{shift}}^{(2)}(t)\big\|_{L^2(\Omega)}
&\le
C\varepsilon^p t^{-\beta}
\int_1^{t/2}s^{-\frac{1}{\gamma(p)}-\frac{1}{2}}\,ds
+
C\varepsilon^p t^{-\frac{1}{\gamma(p)}+\frac{1}{2}-\beta} \notag\\
&\le
C\varepsilon^p
\left(
t^{-\beta}+t^{-\frac{1}{\gamma(p)}+\frac{1}{2}-\beta}
\right)= o(t^{-\gamma})
\label{eq:Rshift2-estimate}
\end{align}
because of the fact \(\beta>\gamma= 1/\gamma(p)+1/2\). Finally, for the near-time part it yields from \eqref{eq:F-Lq-decay} and \eqref{eq:F-Hsigma-decay} that
\begin{align}
\big\|R_{\mathrm{shift}}^{(3)}(t)\big\|_{L^2(\Omega)}
&\le
C\int_{t-1}^{t}
(t-s)^{-1+\frac{\sigma}{2}}
\big\|\mathcal{A}^{\sigma/2}\mathcal{N}[v](s)\big\|_{L^2(\Omega)}\,ds +Ct^{-\frac{1}{\gamma(p)}-\frac{1}{2}} 
\int_{t-1}^{t}\big\|\mathcal{N}[v](s)\big\|_{L^{\gamma(p)}(\Omega)}\,ds \notag\\
&\le
C\varepsilon^p
\left(
t^{-(p-1)\delta-\eta}
+
t^{-\frac{1}{\gamma(p)}-\frac{1}{2}-\beta} 
\right).
\label{eq:Rshift3-estimate}
\end{align}
Since \((p-1)\delta+\eta>\gamma\), combining all estimates \eqref{eq:Rshift1-little-o}, \eqref{eq:Rshift2-estimate} and \eqref{eq:Rshift3-estimate} we arrive at
\begin{equation}
\label{eq:Rshift-little-o}
\big\|R_{\mathrm{shift}}(t)\big\|_{L^2(\Omega)}
=
o(t^{-\gamma}).
\end{equation}

\noindent\textbf{$\bullet$ Estimate of the nonlinear tail:} It is obvious to see that the \(L^{\gamma(p)}\)-\(L^2\) heat semigroup estimate gives
\[
\big\|\mathcal{A}e^{-t\mathcal{A}}f\big\|_{L^2(\Omega)}
\le
Ct^{-\frac{1}{\gamma(p)}-\frac{1}{2}} \|f\|_{L^{\gamma(p)}(\Omega)}
=
Ct^{-\gamma}\|f\|_{L^{\gamma(p)}(\Omega)}
\]
which immediately implies
\begin{align}
\big\|R_{\mathrm{tail}}(t)\big\|_{L^2(\Omega)}
&\le
Ct^{-\gamma}
\int_t^\infty\big\|\mathcal{N}[v](s)\big\|_{L^{\gamma(p)}(\Omega)}\,ds \notag\\
&\le
C\varepsilon^p t^{-\gamma}
\int_t^\infty (1+s)^{-\beta}\,ds \le
C\varepsilon^p t^{-\gamma-\beta+1}=
o(t^{-\gamma}),
\label{eq:Rtail-estimate}
\end{align}
where we have utilized \eqref{eq:F-Lq-decay} and the condition \(\beta>1\). \medskip

Summarizing, plugging the achieved estimates \eqref{eq:Rlin-little-o},
\eqref{eq:Rdw-little-o},
\eqref{eq:Rshift-little-o}, and
\eqref{eq:Rtail-estimate} in
\eqref{eq:ut-profile-decomposition} we have shown that
\[
\left\|
u_t(t)+\mathcal{A}e^{-t\mathcal{A}}(u_0+u_1+M_\infty)
\right\|_{L^2(\Omega)}
=
o(t^{-\gamma}),
\]
which is to verify the desired estimate \eqref{eq:ut-diffusion-limit}. In this way, our proof is complete.

\section{Further remarks} \label{Sec.Final}
\begin{remark}
    Throughout this paper, we have succeeded in not only proving the global (in time) existence of small data solutions but also analyzing the asymptotic profile of global solutions to \eqref{eq:main} in $2$D exterior domains to give a positive answer for an open problem in \cite{DuongDao}. It can be also expected to study such results for \eqref{eq:main} in higher dimensional exterior domains even taking account of the Neumann/Robin boundary conditions are of interest instead of the Dirichlet boundary. For this reason, we are going to develop new tools in terms of improving further our results for this problem in higher dimensions ($n\ge 3$) in the forthcoming work.
\end{remark}

\begin{remark}
    Another very interesting question, which should be proposed from this work, is to explore the corresponding weakly coupled system of \eqref{eq:main}. In other words, the following initial-boundary value problem for semi-linear damped wave equations with the power nonlinearity of derivative type in exterior domains is of interest:
    \begin{equation}\label{sys:main}
    \begin{cases}
        u_{tt}+u_t+ \mathcal{A}u= \mathcal{N}[v_t], & (t,x)\in(0,\infty)\times\Omega,\\
        v_{tt}+v_t+ \mathcal{A}v= \mathcal{N}[u_t], & (t,x)\in(0,\infty)\times\Omega,\\
        u(t,x)=v(t,x)=0, &(t,x)\in(0,\infty)\times \partial\Omega, \\
        u(0,x)=u_0(x),\quad u_t(0,x)=u_1(x), & x\in \Omega, \\
        v(0,x)=v_0(x),\quad v_t(0,x)=v_1(x), & x\in \Omega.
    \end{cases}
    \end{equation}
    At this point, our aim in the future work concerns with showing both the global (in time) existence of small data solutions to \eqref{sys:main} and a blow-up result at least in $2$D and then in higher dimensional cases. For this observation, one may recognize that the so-called critical curve for \eqref{sys:main} is well-established, which has never appeared in previous studies.
\end{remark}

\section*{Acknowledgements}
This work of Tuan Anh Dao is supported by Vietnam Ministry of Education and Training and Vietnam Institute for Advanced Study in Mathematics under grant number B2026-CTT-04. Masahiro Ikeda is supported by JSPS KAKENHI Grant Number JP25K24910 and JP23K03174.

\section*{Data availability}
Data sharing not applicable to this article as no datasets were generated or analyzed during the current study.

\section*{Conflict of interest}
The authors declare that they have no conflict of interest.

\end{document}